\newtheorem{teor}{Theorem}[section]
\newtheorem{lem}{Lemma}[section]
\newtheorem{prop}{Proposition}[section]
\newtheorem{obs}{Remark}
\def\car{\mathop{\rm char}\nolimits}
\title{Normal group algebras}
\author[A. Holguin-Villa]{Alexander Holgu\'in-Villa} 
\address{Alexander Holgu\'in-Villa, Escuela de Matem\'aticas, Universidad Industrial de Santander}
\email{aholguin@uis.edu.co; alexholguinvilla@gmail.com}
\author[J.H. Castillo]{John H. Castillo} 
\address{John H. Castillo, Departamento de Matem\'aticas y Estad\'istica, Universidad de Nari\~no}
\email{jhcastillo@udenar.edu.co}
\keywords{Involutions, group algebras, normal rings, polynomial identity.}
\subjclass[2010]{16W10, 16S34, 16R50}
\begin{document}
\maketitle
 \noindent
  \renewcommand{\refname}{References}


\begin{abstract}
  Let $\mathbb{F}G$ denote the group algebra of the group $G$ over the field $\mathbb{F}$ with
  $\car(\mathbb{F})\neq 2$. Given both a homomorphism $\sigma:G\rightarrow \{\pm1\}$ and a group
  involution $\ast: G\rightarrow G$, an oriented involution of $\mathbb{F}G$ is defined by
  $\alpha=\Sigma\alpha_{g}g \mapsto \alpha^\circledast=\Sigma\alpha_{g}\sigma(g)g^{\ast}$. In
  this paper, we determine the conditions under which the group algebra $\mathbb{F}G$ is normal,
  that is, conditions under which $\mathbb{F}G$ satisfies the $\circledast$-identity
  $\alpha\alpha^\circledast=\alpha^\circledast\alpha$. We prove that $\mathbb{F}G$ is normal if and only if the set of symmetric elements under $\circledast$ is commutative.
\end{abstract}

\section{Introduction}

Let $\mathbb{F}G$ denote the group algebra of the group $G$ over the field $\mathbb{F}$ with $\car(\mathbb{F})\neq 2$. Any involution
$\ast: G \rightarrow G$ can be extended $\mathbb{F}$-linearly to an algebra involution $\ast: \mathbb{F}G \rightarrow \mathbb{F}G$. A natural involution on $G$ is the so-called \emph{classical involution}, which maps $g \in G$ to $g^{-1}$. This involution appears as a technical tool to obtain results on units in a paper of G. Higman \cite{Hig:40}. In particular, it is used there to prove that if $G$ is a finite abelian group, then $\mathbb{Z}G$ has non-trivial units unless either the orders of the elements of $G$ divide four, or six, in which case $\mathbb{Z}G$ has only trivial units.

Let $\sigma:G\rightarrow \{\pm1\}$ be a group homomorphism (called an orientation). If $\ast:G \rightarrow G$ is a group involution, an
\emph{oriented group involution} $\circledast$ of $\mathbb{F}G$ is defined by

\begin{equation}\label{eq0}
\alpha=\sum_{g\in G} \alpha_gg \mapsto \alpha^\circledast=\sum_{g\in G} \alpha_g\sigma(g)g^{\ast}.
\end{equation}

Notice that, when $\sigma$ is non-trivial, $\car(\mathbb{F})$ must be different from $2$. It is clear that, $\alpha\mapsto \alpha^\circledast$
is an involution in $\mathbb{F}G$ if and only if $gg^\ast\in N=ker(\sigma)$ for all $g\in G$.

In the case that the involution on $G$ is the classical involution, $g\mapsto g^{-1}$, the map $\circledast$ is precisely the
oriented involution introduced by S.~P. Novikov (1970) in the context of $K$-theory, see \cite{Nov:70}.

Let $R$ be an $\mathbb{F}$-algebra. Recall that a subset $S$ of $R$ satisfies a polynomial identity ($S\in \text{PI}$ or $S$ is PI for short) if there
exists a nonzero polynomial $f(x_1,x_2,..,x_n)$ in the free associative $\mathbb{F}$-algebra $\mathbb{F}\{X\}$ on the countable set of non-commuting variables $X=\{x_1, x_2,...\}$ such that $f(s_1,s_2,...,s_n)=0$ for all $s_i\in S$. For instance, $R$ is commutative if it satisfies the polynomial
identity $f(x_1, x_2)=x_1x_2-x_2x_1$ and, any finite dimensional associative algebra satisfies the \emph{standard polynomial identity} of degree $n$, \cite[Lemma 5.1.6, p. 173]{Pas:77}

$$St_n(x_1,x_2,...,x_n)=\sum_{\rho\in \mathcal{S}_n}(sgn\rho)x_{\rho(1)}x_{\rho(2)}\cdot\cdot\cdot x_{\rho(n)}.$$

The conditions under which $\mathbb{F}G$ satisfies a polynomial identity were determined in classical results due to Isaacs and Passman summarized as follows.

\begin{teor}\label{teor1}\emph{(\cite[Corollaries 3.8 and 3.10, p. 196-197]{Pas:77})}
Let $\mathbb{F}$ be a field of characteristic $p$ and $G$ a group. Then $\mathbb{F}G$ satisfies a PI
if and only if $G$ has a $p$-abelian subgroup of finite index.
\end{teor}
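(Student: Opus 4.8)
The plan is to prove the two implications separately, the converse being by far the more elementary one. Throughout write $p=\car(\mathbb{F})$ and recall that $G$ is \emph{$p$-abelian} when $G'$ is a finite $p$-group (interpreted as $G'=1$, i.e. $G$ abelian, when $p=0$).

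For the implication $(\Leftarrow)$, suppose $H\le G$ is $p$-abelian with $[G:H]=m<\infty$. First I would show that $\mathbb{F}H$ itself is PI. If $p=0$ this is immediate, since $H$ is abelian and $\mathbb{F}H$ is commutative. If $p>0$, let $\omega$ be the augmentation ideal of $\mathbb{F}H'$ and set $I=\omega\,\mathbb{F}H$. Because $H'$ is normal in $H$, conjugation stabilizes $\omega$, so $I=\mathbb{F}H\,\omega$ is a two-sided ideal with $\mathbb{F}H/I\cong\mathbb{F}[H/H']$ commutative; and since $H'$ is a finite $p$-group over a field of characteristic $p$, the ideal $\omega$ is nilpotent, whence $I^N=\mathbb{F}H\,\omega^N=0$ for some $N$. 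As $[x,y]\in I$ for all $x,y$, the quotient being commutative, the relation $I^N=0$ shows that $\mathbb{F}H$ satisfies the identity $[x_1,y_1][x_2,y_2]\cdots[x_N,y_N]=0$. To pass from $H$ to $G$ I would use that $\mathbb{F}G$ is a free right $\mathbb{F}H$-module of rank $m$, so the left regular action embeds $\mathbb{F}G\hookrightarrow \mathrm{End}_{\mathbb{F}H}(\mathbb{F}G)\cong M_m(\mathbb{F}H)\cong M_m(\mathbb{F})\otimes_{\mathbb{F}}\mathbb{F}H$; since matrix algebras over a PI-algebra are again PI (for instance by Regev's theorem on tensor products, $M_m(\mathbb{F})$ being finite dimensional and $\mathbb{F}H$ being PI), $\mathbb{F}G$ is PI as a subalgebra of $M_m(\mathbb{F}H)$.

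For the implication $(\Rightarrow)$ I would first multilinearize, so that $\mathbb{F}G$ is assumed to satisfy a multilinear identity of degree $n$. The engine is then Kaplansky's theorem: every primitive homomorphic image of a PI-algebra of degree $n$ is a matrix algebra $M_k(D)$ with $2k\le n$. Applying this to the quotients of $\mathbb{F}G$ acting on irreducible modules bounds the degrees of all irreducible representations of $G$ (over algebraic extensions of $\mathbb{F}$) by $n/2$. Thus $G$ is a group of bounded representation degree, and I would invoke the Isaacs–Passman bounded-degree machinery, in its modular refinement, to conclude that a group with this property has a $p$-abelian subgroup of finite index (abelian-by-finite when $p=0$), with both the index and $|A'|$ bounded in terms of $n$; the prime $p$ is singled out by combining Kaplansky's block bound with Maschke semisimplicity in the characteristic-$p$ quotients, which is what forces the $p'$-part of the relevant commutator structure to be removable after passing to a subgroup of finite index.

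The main obstacle is entirely in this forward direction, and specifically in the group-theoretic passage from \emph{bounded representation degree} to \emph{$p$-abelian-by-finite structure}. An example such as an infinite extraspecial $2$-group shows that weaker finiteness data (finite commutator subgroup, finite $\Delta$-index) do not suffice, so the argument genuinely needs the full strength of the Isaacs–Passman degree theorem together with its modular version; by contrast, the use of Kaplansky's and Regev's theorems and the nilpotency of the augmentation ideal are comparatively routine.
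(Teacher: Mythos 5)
First, a point of orientation: the paper does not prove this statement at all --- Theorem \ref{teor1} is quoted verbatim from Passman's book as background --- so there is no ``paper proof'' to match; your proposal can only be judged on its own soundness. Your backward direction is correct and is essentially the textbook argument: nilpotency of $\omega(\mathbb{F}H')\mathbb{F}H$ gives the identity $[x_1,y_1]\cdots[x_N,y_N]=0$ for $\mathbb{F}H$, and the embedding $\mathbb{F}G\hookrightarrow M_m(\mathbb{F}H)$ finishes it (you do not even need Regev: modulo the nilpotent ideal $M_m(I)$ the ring is $M_m(\mathbb{F}[H/H'])$, which satisfies $St_{2m}$ by Amitsur--Levitzki, so $M_m(\mathbb{F}H)$ satisfies a product of $N$ copies of $St_{2m}$).

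The forward direction, however, has a genuine gap, and it is not just a matter of citing the hard lemma: the intermediate condition you reduce to is too weak in characteristic $p$. Kaplansky's theorem only controls \emph{primitive} quotients of $\mathbb{F}G$, i.e.\ what survives the Jacobson radical, and in characteristic $p$ the radical can swallow everything relevant to the $p$-structure of $G$. Concretely, let $G$ be any infinite locally finite $p$-group that is not $p$-abelian-by-finite, for instance the restricted wreath product $C_p\wr A$ with $A$ an infinite elementary abelian $p$-group, or the group of upper unitriangular $\mathbb{N}\times\mathbb{N}$ matrices over $\mathbb{F}_p$ with finitely many nonzero entries above the diagonal. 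For such $G$ the augmentation ideal of $\mathbb{F}G$ is nil (every finite subset lies in $\omega(\mathbb{F}H)$ for a finite $p$-subgroup $H$, and that ideal is nilpotent), so the \emph{only} irreducible $\mathbb{F}G$-module is the trivial one: all irreducible representation degrees equal $1$, yet $G$ has no $p$-abelian subgroup of finite index and $\mathbb{F}G$ is not PI. Hence no ``modular refinement'' of the bounded-representation-degree theorem of the kind you invoke can exist: bounded degree simply does not imply the conclusion, nor does it imply PI, once $\car(\mathbb{F})=p>0$. (Your own extraspecial example shows $\Delta$-finiteness data are insufficient; this example shows representation-degree data are insufficient too.) The actual proof, the one behind the citation, cannot pass through representation theory of quotients; it works with the group ring itself via Passman's $\Delta$-methods --- coset-counting/bilinear-form arguments showing that a PI of degree $n$ forces $[G:\Delta(G)]$ and $|\Delta(G)'|$ to be finite and bounded in terms of $n$ --- followed by a second use of the PI hypothesis (not merely of the finiteness conclusions) to extract a subgroup of finite index whose commutator subgroup is a finite $p$-group. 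Your sketch is a valid outline only in characteristic $0$, where the group algebra is semiprime and primitive quotients see enough of $G$.
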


Recall that, for a prime $p$, a group $G$ is called $p$-abelian if $G'$, the commutator subgroup of $G$,
is a finite $p$-group and $0$-abelian means abelian.

\vspace{0.3cm}

Let $R$ be a ring with $1_R=1$ and $\star$ an involution on $R$. Let us denote by $R^+=\left\{r\in R: r^\star=r\right\}$ and $R^-=\left\{r\in R: r^\star=-r\right\}$ the sets of \emph{symmetric and skew-symmetric elements} respectively of the ring $R$ under the involution $\star$. We denote with $\mathcal{U}(R)$ the unit group of $R$ and $\mathcal{U}^+(R)=\mathcal{U}(R) \cap R^+$ for the set of symmetric units. A question
of general interest is which properties of $R^+$ or $R^-$ can be lifted to $R$. A similar question may be posed for the set of the symmetric units
or the subgroup that they generate, i.e., to determine the extent to which the properties of $\mathcal{U}^+(R)$ determine either the properties of
the whole unit group $\mathcal{U}(R)$ or the whole ring $R$. After the fundamental work of Amitsur \cite{Am:68, Am:69}, and the interest in rings
with involution developed from the 1970s by Herstein and his collaborators, \cite{Her}, it is natural to consider group algebras from this viewpoint.

If $R$ is a $\mathbb{F}$-algebra with involution $\star$ such that $\lambda^\star=\lambda$, for all $\lambda\in \mathbb{F}$, we may define an involution
on the free associative algebra $\mathbb{F}\{X\}$ (again $X$ is a countable set of non-commuting variables) by setting $x_{2n+1}^\star=x_{2n+2}$,
for all $n\geq 0$. Then, after renumbering we get the free associative algebra with involution, $\mathbb{F}\{X, \star\}=\mathbb{F}\{x_1, x_1^\star, x_2, x_2^\star,...\}$. Of course, an element in this algebra is a polynomial in the variables $x_i$ and $x_i^\star$, which do not commute. We say that
$0\neq f(x_1, x_1^\star,..., x_n, x_n^\star)\in \mathbb{F}\{x_1, x_1^\star, x_2, x_2^\star,...\}$ is a $\star$-polynomial identity
($\star$-PI for short) for a subset $S$ of $R$, if $f(s_1, s_1^\star,..., s_n, s_n^\star)=0$ for all $s_1, s_2,..., s_n\in S$. With this terminology in mind, the results of Amitsur can be written in the following way.

\begin{teor}\label{teor2}\emph{(\cite[Theorem 1, p. 63]{Am:69})}
Let $R$ be an $\mathbb{F}$-algebra with involution $\star$. If $R$ satisfies a $\star$-PI
$f(x_1, x_1^\star,..., x_r, x_r^\star)$ of degree $d$, then $R$ satisfies $St_{2d}(x_1,x_2,...,x_{2d})^m$ for some $m$, the \emph{standard polynomial identity} in $2d$ variables. If $R$ is \emph{semi-prime} then $m=1$, i.e., if $R$ is $\star$-PI,
then $R$ is (in usual sense) PI.
\end{teor}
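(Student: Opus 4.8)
The plan is to deduce the ordinary standard identity $St_{2d}$ from the $\star$-identity by invoking the structure theory of rings with involution while carefully tracking degrees. First I would linearize: a $\star$-identity of degree $d$ produces, by the usual multilinearization, a multilinear $\star$-identity of degree at most $d$ in the letters $x_1,x_1^\star,\dots,x_r,x_r^\star$ that $R$ still satisfies, and multilinearity is what makes the coordinate substitutions below transparent. Since $\car(\mathbb{F})\neq 2$ we may also pass freely between the variables $x_i,x_i^\star$ and the combinations $x_i+x_i^\star$, $x_i-x_i^\star$ when convenient.

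Next I would reduce to the semiprime case. The prime radical $N$ of $R$ is nil and $\star$-invariant, because $\star$ permutes the prime ideals of $R$ and hence fixes their intersection; thus $\bar R=R/N$ is semiprime, carries the induced involution, and satisfies the same multilinear $\star$-PI. Granting that $\bar R$ satisfies $St_{2d}$, every value $St_{2d}(r_1,\dots,r_{2d})$ on $R$ lies in the nil ideal $N$, and a nilpotency argument for $N$ then yields $St_{2d}^m\equiv 0$ on $R$ for a suitable exponent $m$; when $R$ is already semiprime we have $N=0$ and $m=1$, which is the final assertion.

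It remains to treat the semiprime ring $\bar R$. I would realize it as a subdirect product of $\star$-prime rings, each inheriting the $\star$-PI, and recall that a $\star$-prime ring $T$ is of exactly one of two types. If $T$ is of \emph{exchange} type, $T\cong A\oplus A^{\mathrm{op}}$ with $A$ prime and $\star$ the coordinate exchange; reading the $\star$-identity in the first coordinate, where $x_i$ and $x_i^\star$ become the two independent coordinate entries, turns $f$ into the ordinary identity obtained by regarding $x_i$ and $x_i^\star$ as independent variables, a nonzero identity of degree $d$ for the prime ring $A$. By Kaplansky's theorem the central closure of $A$ is then a finite-dimensional central simple algebra of bounded PI-degree, so $A$, and hence $A^{\mathrm{op}}$ and $T$, satisfies $St_{2d}$ (using that satisfaction of $St_k$ implies satisfaction of $St_{k'}$ for every $k'\ge k$). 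If instead $T$ is prime with a genuine involution, I would pass to the central closure over the extended centroid, a field over which the involution and the $\star$-PI persist and where $T$ becomes a primitive algebra amenable to the density theorem, and invoke the structural fact that such a $\star$-primitive algebra is simple and finite-dimensional over its center, with matrix size bounded by $d$; this again forces $St_{2d}$. Finally, a subdirect product of rings each satisfying $St_{2d}$ satisfies $St_{2d}$, so $\bar R$ does, completing the semiprime case.

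The main obstacle is the structural step for the genuine-involution type: proving that a $\star$-primitive ring satisfying a $\star$-identity of degree $d$ is finite-dimensional and simple over its center, with the matrix dimension bounded by $d$ so that precisely $St_{2d}$, and nothing worse, is forced. This rests on the Jacobson density theorem, presenting $T$ as a dense ring of linear transformations of a vector space over a division ring, combined with a delicate combinatorial use of the multilinear $\star$-identity to rule out arbitrarily large matrix blocks, and it must be carried out separately for involutions of the first kind (acting trivially on the center) and of the second kind (acting nontrivially). A secondary technical point, already visible above, is verifying that the nil radical $N$ is controlled well enough in the present PI setting, for instance nilpotent, so that a single exponent $m$ suffices in the non-semiprime case; this is exactly why the conclusion is stated as a power $St_{2d}^m$ rather than $St_{2d}$ itself.
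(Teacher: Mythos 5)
Note first that the paper offers no proof of this statement: it is quoted verbatim as Amitsur's theorem from \cite{Am:69}, and the paragraph immediately following it in the paper summarizes how the known proof goes. Measured against that, your semiprime analysis has the right skeleton (subdirect product of $\star$-prime rings; the dichotomy between exchange type and genuine involution; reading the $\star$-identity as an ordinary identity of degree $d$ in the exchange case; Kaplansky/Posner plus Amitsur--Levitzki to force $St_{2d}$), modulo two caveats: in the non-prime $\star$-prime case one gets a subdirect embedding $T\hookrightarrow T/P\oplus (T/P^{\star})\cong A\oplus A^{\mathrm{op}}$ rather than an isomorphism (harmless for your argument), and you explicitly leave unproved the crux that a primitive ring with genuine involution satisfying a $\star$-PI of degree $d$ is finite-dimensional simple with matrix size bounded by $d$ --- but that bound \emph{is} the substance of Amitsur's theorem in the prime case, so invoking it as a ``structural fact'' makes your argument an outline of Amitsur's proof rather than a proof.

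The genuine gap is the descent from $R/N$ to $R$. You argue that every value of $St_{2d}$ lies in the prime radical $N$ and that ``a nilpotency argument for $N$'' yields a uniform exponent $m$, suggesting $N$ might be nilpotent in the PI setting. This step fails: the prime radical of a PI-algebra is nil but in general neither nilpotent nor even nil of bounded index (already $\mathbb{F}[x_1,x_2,\dots]/(x_i^i : i\geq 1)$ is commutative, hence PI, with non-nilpotent nil radical), so knowing each value $St_{2d}(r_1,\dots,r_{2d})$ is nilpotent gives no single $m$ valid for all substitutions. The missing idea is precisely \emph{Amitsur's trick}, which the paper alludes to right after the theorem: pass to the relatively free algebra with involution $U=\mathbb{F}\{X,\star\}/T^{\star}(R)$, where $T^{\star}(R)$ is the ideal of $\star$-identities of $R$. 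Then $U$ satisfies the same $\star$-PI, its prime radical $N(U)$ is $\star$-invariant (the involution permutes prime ideals), and $U/N(U)$ is semiprime, so by your semiprime case the \emph{single generic element} $s=St_{2d}(\bar{x}_1,\dots,\bar{x}_{2d})\in U$ lies in $N(U)$ and hence satisfies $s^{m}=0$ for some fixed $m$. Since every substitution $r_1,\dots,r_{2d}\in R$ is realized by a $\star$-homomorphism $U\rightarrow R$ sending $\bar{x}_i\mapsto r_i$, this one relation transfers to $St_{2d}(r_1,\dots,r_{2d})^{m}=0$ for all $r_i$. The uniformity of $m$ thus comes from the genericity of one element, not from any nilpotency of the radical --- and, as the paper remarks, it is exactly this step that destroys all degree information, the defect later repaired in \cite{BaGiZa}.
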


It is obvious that if $R^+$ (or $R^-$) satisfies a polynomial identity $f(x_1, x_2,..., x_n)$, then $R$ satisfies the $\star$-PI $f(x_1+x_1^\star,...,x_n+x_n^\star)$ (respectively $f(x_1-x_1^\star,..., x_n-x_n^\star)$) and, by the above theorem, it is PI.

Notice that Amitsur's result proves the existence of an ordinary polynomial identity for the $\mathbb{F}$-algebra $R$ however, in general, does not give
any information on its degree. The reason for this failure is the following: the theorem was proved first for semi-prime rings where, through structure
theory, the degree of an identity for $R$ is well related to that of the given $\star$-identity; then the result was pushed to arbitrary rings by means
of the so-called Amitsur's trick. In that procedure any information on the degree of the $\star$-identity satisfied by $R$ is lost. This problem was
solved by Bahturin, Giambruno and Zaicev in \cite{BaGiZa}. In fact, by using combinatorial methods pertaining to the asymptotic behaviour of a numerical sequence attached to the algebra $R$, it was shown that one can relate the degree of a $\star$-polynomial identity satisfied by $R$ to the degree of a polynomial identity for $R$ by mean of an explicit function.

Another question in this direction is the following: if $R$ satisfies some special kind of $\star$-polynomial
identity, what kind of ordinary identity can one get in Amitsur's result? Recalling that $R^-$ is a Lie
subalgebra of $R$ under the bracket operation $[a, b]=ab-ba$, it is natural to ask if, in particular, the Lie
nilpotence of $R^-$ implies the Lie nilpotence (or some other special type of identity) of $R$. The best known
result in this direction is due to Zalesskii and Smirnov.

\begin{teor}[\cite{ZS:81}]
Suppose that $R=\langle R^-, 1\rangle$ as a ring and that $\car(R)\neq 2$. If $R^-$ is Lie nilpotent then
$R$ is Lie nilpotent.
\end{teor}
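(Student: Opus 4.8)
The plan is to secure access to structure theory first and then track where the hypothesis can fail. Since $\car(R)\neq 2$ we may write $R=R^{+}\oplus R^{-}$, so every element is $\tfrac12(r+r^{\star})+\tfrac12(r-r^{\star})$, and because $R=\langle R^{-},1\rangle$ each element of $R$ is an $\mathbb{F}$-combination of products of skew elements. By hypothesis there is an $n$ with $[a_{1},\dots,a_{n}]=0$ for all $a_{i}\in R^{-}$, where $[x_{1},\dots,x_{k}]=[[x_{1},\dots,x_{k-1}],x_{k}]$; this is an ordinary polynomial identity satisfied by the set $R^{-}$. By the remark following Theorem~\ref{teor2}, $R$ then satisfies the $\star$-identity obtained from the substitution $x_{i}\mapsto x_{i}-x_{i}^{\star}$, so by Amitsur's theorem $R$ is PI in the ordinary sense. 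This is the crucial entry step: it is what makes the structure theory of PI-rings available for the rest of the argument.

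I would then work in two layers. Let $\mathcal{N}$ be the prime (lower nil) radical of $R$; it is a characteristic ideal, hence $\star$-invariant, so the involution descends to $\overline{R}=R/\mathcal{N}$, with $\overline{R}=\langle \overline{R}^{-},1\rangle$ and $\overline{R}^{-}$ still Lie nilpotent. The semiprime PI-ring $\overline{R}$ is a subdirect product of prime PI-rings $P$, and generation passes to each factor, so $P=\langle P^{-},1\rangle$ with $P^{-}$ Lie nilpotent. By Posner's theorem $P$ embeds in a central simple algebra of quotients $Q=M_{k}(D)$, finite-dimensional over the field of fractions of its centre, and $\star$ extends to $Q$. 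Here the hypothesis bites via the classical (Herstein-type) analysis of Lie-nilpotent skew elements of central simple algebras with involution: in each case where $P^{-}$ generates $P$ and is Lie nilpotent, one checks that $Q$ is forced to be commutative. For instance, $M_{2}(F)$ with the symplectic involution gives $P^{-}=\mathfrak{sl}_{2}$, which is not Lie nilpotent, while the orthogonal involution gives a one-dimensional $P^{-}$ that fails to generate $M_{2}(F)$; similar dichotomies exclude the remaining noncommutative possibilities. Hence every prime factor is commutative, so $\overline{R}$ is commutative and $[R,R]\subseteq\mathcal{N}$.

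The main obstacle is the passage back from $\overline{R}$ to $R$, that is, lifting Lie nilpotence through the nil radical \emph{with a uniform bound on the class}. This is exactly where a crude application of Amitsur's trick loses control of the degree, as the discussion following Theorem~\ref{teor2} warns, and it is genuinely delicate for two reasons. First, being commutative modulo a nilpotent ideal does not by itself force Lie nilpotence---the upper triangular matrices are commutative modulo a square-zero ideal yet only Lie \emph{solvable}---so the involution must be used in an essential way. Second, $\mathcal{N}$ need not be nilpotent, only nil (as in the Grassmann algebra), so one cannot simply induct on a global nilpotency index $\mathcal{N}^{t}=0$. The mechanism I would exploit is that, for $a,b\in R^{-}$, the skew commutator $[a,b]$ lands in $\mathcal{N}$ and is annihilated by the hypothesis after at most $n-1$ further brackets, while the Jordan products $ab+ba\in R^{+}$ supply the extra relations that let one drive the inner entries of a long left-normed commutator into $R^{-}$ via the Leibniz rule $[uv,w]=u[v,w]+[u,w]v$; the resulting combinatorial induction forces $\gamma_{k}(R)$ into increasingly deep Lie powers that the hypothesis kills, with a class depending only on $n$. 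The indispensability of the involution is underscored by the enveloping algebra of the Heisenberg Lie algebra, which is generated by a nilpotent Lie algebra and yet is not Lie nilpotent precisely because the central commutator $[x,y]$ is not nilpotent; in the present setting the symmetric structure produced by $\star$ is what prevents such behaviour and makes the induction close.
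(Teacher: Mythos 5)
A point of comparison first: the paper does not prove this statement at all --- it is quoted as background from Zalesskii--Smirnov \cite{ZS:81} --- so your attempt has to be judged on its own merits. Your opening moves are sound and are indeed the natural ones: Lie nilpotence of $R^-$ is an ordinary PI for the set $R^-$, hence yields a $\star$-PI for $R$, and Amitsur's theorem (Theorem \ref{teor2}) makes $R$ a PI-ring; the prime radical $\mathcal{N}$ is invariant under any anti-automorphism, so the involution descends to the semiprime PI-ring $\overline{R}=R/\mathcal{N}$. But your handling of $\overline{R}$ has both a flaw and an unnecessary detour. The flaw: the prime ideals occurring in a subdirect decomposition of $\overline{R}$ need not be $\star$-invariant, so ``$P^-$'' and the claim $P=\langle P^-,1\rangle$ are not even defined; one must first pass to $\star$-prime quotients (which are either prime or embed in $R/P\times R/P^\star$ with the exchange involution) before any Posner/Herstein case analysis, and your case analysis itself is left at ``similar dichotomies exclude the remaining possibilities.'' The detour: none of this machinery is needed, because Lemma \ref{lem2} of the paper says directly that in a semiprime ring Lie nilpotence of the skew elements forces $[\overline{R}^-,\overline{R}^-]=0$; since $\overline{R}$ is generated by $1$ together with the (pairwise commuting) image of $R^-$, it is commutative, giving $[R,R]\subseteq\mathcal{N}$ in two lines. (Also note that for a bare ring, $\car(R)\neq 2$ does not furnish the element $\tfrac12$ your very first displayed decomposition uses; one needs $2R=R$ or torsion-freeness, as in Lemma \ref{lem2}.)

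The genuine gap is the final step, and you half-acknowledge it yourself. Everything up to $[R,R]\subseteq\mathcal{N}$ is the easy part; the actual content of the Zalesskii--Smirnov theorem is the lift through the nil radical, and what you offer there is not an argument but a description of one: ``the resulting combinatorial induction forces $\gamma_k(R)$ into increasingly deep Lie powers that the hypothesis kills, with a class depending only on $n$.'' The hypothesis annihilates only left-normed brackets \emph{all} of whose entries lie in $R^-$. A general element of $R$ is a sum of products of skew elements, and when you expand $[r_1,\dots,r_N]$ by the Leibniz rule you obtain a sum of terms of the form $u\,[a_{i_1},\dots,a_{i_k}]\,v$, where the skew brackets appear sandwiched between coefficients $u,v$ that are themselves arbitrary products of skew elements. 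Nothing in this bookkeeping forces every term to contain a skew bracket of length at least $n$ as a factor, and the terms cannot be recombined at will because the coefficients commute with the brackets only modulo $\mathcal{N}$ --- which, as you yourself point out, is merely nil rather than nilpotent, so no power $\mathcal{N}^t=0$ is available to terminate an induction. Your Heisenberg and upper-triangular examples show you understand why naive arguments must fail, but identifying the obstruction is not the same as overcoming it: the inductive scheme whose existence you assert in that one sentence \emph{is} the theorem, and it is missing.
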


An $\mathbb{F}$-algebra $R$ with involution $\star$ is said to be \emph{normal} if $rr^\star=r^\star r$, for all $r\in R$, \cite{Her}. In this
paper, we characterize group algebras $\mathbb{F}G$ which are normal in regard to the oriented group involution $\circledast$. This question
was studied by Bovdi and Siciliano in \cite{Bov:08}. Our methods provide shorter and natural proofs of some of the results in \cite{Bov:08}, using the concept of LC-group and as an application of them we show that $\mathbb{F}G$ is normal if and only if $\mathbb{F}G^+$ is commutative, see \cite{OP}.

Throughout this paper $\mathbb{F}$ will always denote an infinite field with $\car(\mathbb{F})\neq 2$, $G$ a group, $\ast$ and $\sigma$ an
involution and an orientation of $G$, respectively. We will denote with $\circledast$ an oriented group involution of $\mathbb{F}G$ given
by the expression \eqref{eq0}.

\section{Prerequisites and assumed results}

We gather important facts to use in later sections. In order to state our results, we need a definition. Let $\zeta=\zeta(G)$
denote the center of $G$. We recall the reader that a group $G$
is said to be an LC-group (that is, it has the ``limited commutativity'' property)  if it is non-abelian and for any pair of elements $g, h\in G$, we have that $gh=hg$ if and only if at least one element of $\{g, h, gh\}$ lies in $\zeta$. This family of groups was introduced by E. Goodaire. Moreover, by \cite[Proposition III.3.6, p. 98]{GJP:96} LC groups with a unique non-identity commutator $s$ (obviously it has order $2$ and is central) are precisely those non-abelian groups with $G/\zeta(G)\cong C_2\times C_2$, where $C_2$ is the cyclic group of order $2$. If $G$ has an involution $\ast$, then we say that $G$ is an special LC-group, or SLC-group, if it is an LC-group, it has an unique non-identity commutator $s$
and the involution  $\ast$ is given by

\begin{equation}\label{eq1}
g^* = \begin{cases}
         g, & \text{if $g$ is central;} \\
         sg, & \text{otherwise;}
      \end{cases}
\end{equation}
and we refer to this as the \emph{canonical} involution on an SLC-group.

The multiplicative commutator $g^{-1}h^{-1}gh$ of $g, h\in G$ will be denoted by $(g,h)$ and $g^h=h^{-1}gh$.

\begin{obs}
If $G$ is a group with the LC-property, then for all $g\in G$, $g^2$ is central. Thus, since  $(g,h)=g^{-1}h^{-1}gh=g^{-2}gh^{-1}gh^{-1}h^2=g^{-2}(gh^{-1})^2h^2$, commutators are central in
a LC-group $G$.
\end{obs}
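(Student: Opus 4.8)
The statement bundles two assertions: (A) that $g^2 \in \zeta$ for every $g \in G$, and (B) that every commutator $(g,h)$ is central; and it already records the purely formal group identity $(g,h) = g^{-2}(gh^{-1})^2 h^2$ that reduces (B) to (A). So the plan is to prove (A) first and then read off (B). Everything rests on (A), so I would spend the effort there.

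For (A) I would argue directly from the defining ``limited commutativity'' property. Fix $g \in G$; if $g \in \zeta$ there is nothing to prove, so assume $g \notin \zeta$. Since $g$ commutes with itself, applying the LC-condition to the pair $(g,g)$ — here $gh = hg$ holds trivially and $\{g, h, gh\} = \{g, g^2\}$ — forces $g \in \zeta$ or $g^2 \in \zeta$; as the first option is excluded, $g^2 \in \zeta$. The main point to be careful about, and what I expect to be the real obstacle, is whether one is willing to instantiate the LC-condition at a ``pair'' of \emph{equal} elements. If the definition is read as allowing two possibly-equal elements this is immediate; if one insists on distinct elements, I would instead apply the condition to the genuinely distinct commuting pair $(g, g^2)$, obtaining only the weaker trichotomy $g \in \zeta$, $g^2 \in \zeta$, or $g^3 \in \zeta$.

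The sole case then left to dispose of is $g^3 \in \zeta$ together with $g^2 \notin \zeta$, and eliminating this spurious possibility is the crux. Here I would apply the LC-condition to a second commuting pair of powers of $g$, using that $g^4 = g \cdot g^3$ and $g^5 = g^2 \cdot g^3$ are central exactly when $g$, respectively $g^2$, are. Since $g \notin \zeta$ gives $g^4 \notin \zeta$ and $g^2 \notin \zeta$ gives $g^5 \notin \zeta$, feeding these into the condition for the pair $(g, g^4)$ would force the third element $g^4$ into $\zeta$, whence $g \in \zeta$ — a contradiction. A small separate check is needed in the degenerate situation where $g$ has order $3$ (so that $g$ and $g^4$ coincide and the pair collapses), where I would instead invoke an element $y$ outside $\langle g\rangle$ with $(g,y)\neq 1$ to produce the contradicting commuting pair.

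With (A) in hand, (B) is immediate and I expect no difficulty: in the displayed identity $(g,h) = g^{-2}(gh^{-1})^2 h^2$ each of the three factors $g^{-2} = (g^2)^{-1}$, $(gh^{-1})^2$, and $h^2$ is a square or the inverse of a square, hence central by (A); therefore their product $(g,h)$ lies in $\zeta$. In summary, the entire weight of the proof is the squares-are-central claim (A), and specifically the elimination of the $g^3 \in \zeta$ possibility under the strict reading of the LC definition.
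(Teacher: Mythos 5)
Your main argument for (A) is exactly the intended one, and it matches the paper's reading of the definition: the paper quantifies over ``any pair of elements $g,h\in G$'' with no distinctness requirement, so applying the LC condition to the commuting pair $(g,g)$ gives $g\in\zeta$ or $g^2\in\zeta$, hence $g^2\in\zeta$ in either case; part (B) then follows from the displayed identity $(g,h)=g^{-2}(gh^{-1})^2h^2$ precisely as you say. This is the whole content of the remark, which the paper leaves implicit.

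However, the excursion you make under the ``strict'' reading (pairs of distinct elements) contains a genuine error, and it is worth naming because you present it as the crux: the ``small separate check'' you defer for elements of order $3$ cannot be carried out, because under that reading the statement is simply false. Take $G=S_3$: its center is trivial, and the only commuting pairs of \emph{distinct} elements are those involving the identity and the pair $\{(123),(132)\}$, whose product is the identity; so in every such pair one of $g$, $h$, $gh$ lies in $\zeta$, and conversely if one of $g$, $h$, $gh$ equals the identity the pair commutes. Thus $S_3$ satisfies the distinct-pair LC condition, yet $(123)^2=(132)$ is not central and the commutator $((12),(13))=(123)$ is not central either. So no choice of an element $y$ with $(g,y)\neq 1$ can yield the contradiction you hope for --- there is none to be had --- and the order-$3$ case is not a degenerate technicality but the exact point where the two readings of the definition diverge. (A minor additional slip: in your pair $(g,g^4)$ argument the three relevant elements are $g$, $g^4$, $g^5$; having excluded $g^4$ and $g^5$ from $\zeta$, the LC condition forces $g\in\zeta$, which is the contradiction --- your sentence instead says it forces $g^4\in\zeta$.) The correct conclusion is that the definition must be read as allowing $g=h$, which is what the paper does, and then your first paragraph already completes the proof.
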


We recall that a non-abelian group $G$ is a Hamiltonian group if every subgroup of $G$ is normal. It is well-known that in this
case $G\cong Q_8\times E\times O$, \cite[Theorem 1.8.5, p. 63]{PS:02}, where $Q_8=\langle x, y: x^4=1, x^2=y^2, y^{-1}xy=x^{-1}\rangle$ is
the quaternion group of order $8$, $E$ is an elementary abelian $2$-group and $O$ is an abelian group with every element of odd
order. When $O=\{1\}$, $G$ is called a Hamiltonian $2$-group.

In \cite{JRM:06} Jespers and Ruiz Mar\'in characterized when for an arbitrary commutative ring $R$ and a group $G$ the set $RG^+$ forms
a commutative ring with respect to the induced involution $\ast$. In particular, when $\car(R)\neq 2$ the result is as follows:

\begin{lem}\label{lem1}\emph{(\cite[Theorem 2.4, p.730]{JRM:06})}
Let $R$ be a commutative ring with $\car(R)\neq 2$, $G$ a non-abelian group with an involution $\ast$ which is extended $R$-linearly to
$RG$. Then, $RG^+$ is commutative if and only if $G$ is an SLC-group. For the classical involution on $G$ ($g\mapsto g^{-1}$), it follows
that $RG^+$ is commutative if and only if $G$ is a Hamiltonian $2$-group, \cite[Theorem 2.2, p. 3351]{Cristo:06}.
\end{lem}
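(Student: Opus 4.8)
The plan is to reduce the commutativity of $RG^+$ to a finite family of relations among group elements and then read off the structure of $G$. Since $\car(R)\neq 2$, every element of $RG$ splits into its symmetric and skew parts, and the $R$-module $RG^+$ is spanned by the group elements $g$ with $g^\ast=g$ together with the elements $g+g^\ast$ for which $g^\ast\neq g$. Hence $RG^+$ is commutative if and only if all of these spanning elements commute pairwise, and this is the working criterion I would use throughout. The two directions are then of very different character: sufficiency is a direct computation, while necessity requires a support analysis.

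For sufficiency, assume $G$ is an SLC-group with the canonical involution, so there is a central commutator $s$ of order $2$, one has $G/\zeta\cong C_2\times C_2$, and $g^\ast=g$ for central $g$ while $g^\ast=sg$ otherwise. Then the spanning set of $RG^+$ consists of the central $g$ together with the elements $(1+s)g$ for $g$ non-central. Central elements commute with everything, so it remains to verify $(1+s)g\cdot(1+s)h=(1+s)h\cdot(1+s)g$. Using that $s$ is central with $s^2=1$, the left side collapses to $2(1+s)gh$ and the right to $2(1+s)hg$; since a non-identity commutator can only be $s$, either $(g,h)=1$ or $gh=s\,hg$, and in both cases $(1+s)gh=(1+s)hg$. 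Thus the two sides agree, and I expect no difficulty here beyond this computation.

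The substance lies in necessity. First I would observe that the symmetric group elements $H=\{g\in G:g^\ast=g\}$ form an abelian subgroup, closure and commutativity being immediate from the criterion. Next, for a non-symmetric $g$ and any $h\in H$, the relation $h(g+g^\ast)=(g+g^\ast)h$ forces, by matching the two-element supports $\{hg,hg^\ast\}$ and $\{gh,g^\ast h\}$, that $h$ either commutes with $g$ or conjugates $g$ to $g^\ast$. Applying the criterion to the generators $g+g^\ast$ and $h+h^\ast$ for a non-commuting pair $g,h$, and comparing the (at most) four-element supports on each side, I would extract successively that each non-commuting pair yields a single commutator, that this commutator is central of order $2$ and independent of the chosen pair, and that therefore $G/\zeta\cong C_2\times C_2$ with $\ast$ of the canonical form \eqref{eq1}; this identifies $G$ as an SLC-group. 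The main obstacle is precisely this bookkeeping: the support-matching branches according to which of $gh,\,gh^\ast,\,g^\ast h,\,g^\ast h^\ast$ coincide, and the degenerate matchings must be ruled out to isolate the unique-commutator condition. Finally, the classical-involution statement follows by specializing $g^\ast=g^{-1}$ in \eqref{eq1}: centrality forces $g^2=1$ while non-centrality forces $g^2=s$, which is exactly a presentation of a Hamiltonian $2$-group.
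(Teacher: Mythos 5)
The paper itself offers no proof of this lemma: it is stated as an assumed prerequisite, with the proof deferred to Jespers--Ruiz Mar\'in \cite[Theorem 2.4]{JRM:06} and, for the classical involution, to \cite[Theorem 2.2]{Cristo:06}. So the only question is whether your attempt is itself a complete proof, and it is not. Your sufficiency direction is correct and complete: the spanning set $\{g: g^\ast=g\}\cup\{g+g^\ast: g^\ast\neq g\}$ of $RG^+$, the observation that under the canonical involution \eqref{eq1} the symmetric group elements are exactly the central ones, and the computation $(1+s)g\,(1+s)h=2(1+s)gh=2(1+s)hg$ (using that a non-trivial commutator must equal the central element $s$ of order $2$) settle that direction.

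The necessity direction, however, is a plan rather than a proof. Your steps (a) (the symmetric elements of $G$ form an abelian subgroup) and (b) (a symmetric $h$ and a non-symmetric $g$ either commute or satisfy $hgh^{-1}=g^\ast$) are correct, but everything after that --- ``each non-commuting pair yields a single commutator, this commutator is central of order $2$ and independent of the chosen pair, hence $G/\zeta\cong C_2\times C_2$ with $\ast$ of the canonical form'' --- is precisely the conclusion of the theorem, restated as a to-do list. The support-matching you explicitly defer is where all of the difficulty lives: from $(g+g^\ast)(h+h^\ast)=(h+h^\ast)(g+g^\ast)$ with $gh\neq hg$ one only learns that $gh$ equals one of $hg^\ast$, $h^\ast g$, $h^\ast g^\ast$, and coincidences inside each support change the coefficient count, so the cases cannot be dismissed uniformly. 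Ruling out the degenerate matchings, showing the resulting commutator is the \emph{same} element $s$ for every non-commuting pair, proving that $s$ is central of order $2$, and --- crucially --- closing the branch $hgh^{-1}=g^\ast$ left open in your step (b), so that symmetric elements are in fact central, each require separate arguments exploiting further symmetric elements (e.g.\ $gg^\ast$, $gh+h^\ast g^\ast$, and $ghg^\ast$ when $h^\ast=h$). None of this is carried out, and it is not routine bookkeeping: it is the substance of \cite[Theorem 2.4]{JRM:06}. (Your final specialization of \eqref{eq1} to $g^\ast=g^{-1}$, giving $g^2=1$ for central $g$ and $g^2=s$ otherwise, is fine, modulo the standard argument splitting $\zeta(G)$ as $\{1,s\}\times E$ to exhibit $G\cong Q_8\times E$.)
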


In an associative ring $R$, we define the Lie product via $[x_1, x_2]=x_1x_2-x_2x_1$ and, we can extended this recursively via

$$[x_1,...,x_n,x_{n+1}]=[[x_1,...,x_n], x_{n+1}].$$

Let $S$ be a subset of R. We say that $S$ is \emph{Lie nilpotent} if there exists an $n\geq 2$ such that $[a_1,...,a_n]=0$ for all $a_i\in S$. The smallest such $n$ is called the nilpotency index of $S$. For a positive integer $n$, we say that $S$ is Lie $n$-Engel if

$$[a,\underbrace{b,...,b}_{n \text{ times}}]=0$$
for all $a, b\in S$. Obviously if $S$ is Lie nilpotent then it is Lie $n$-Engel for some $n$.

In the study of the Lie nilpotence and the Lie n-Engel properties in the sets $\mathbb{F}G^+$ and $\mathbb{F}G^-$, Giambruno and Sehgal
\cite[Theorem 1, p. 4255]{GiSe} and Giambruno, Polcino Milies and Sehgal \cite[Lemma 2.4, p. 891]{GPS:09} showed the following general result
about semiprime rings with involution.

\begin{lem}\label{lem2}
Let $R$ be a semiprime ring with involution $\star$ such that $2R=R$. If $R^-$ (respectively $R^+$) is Lie nilpotent (Lie $n$-Engel), then $[R^-,R^-]=0$ (respectively $[R^+,R^+]=0$) and $R$ satisfies $St_4(x_1, x_2, x_3, x_4)$, the standard polynomial identity in four non-commuting variables, i.e., $R$
satisfies
$$
St_4(x_1,x_2,x_3,x_4)=\sum_{\rho\in \mathcal{S}_4}(sgn\rho)x_{\rho(1)}x_{\rho(2)}x_{\rho(3)}x_{\rho(4)}.
$$
\end{lem}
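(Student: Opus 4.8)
The plan is to reduce the semiprime hypothesis to the case of $\star$-prime rings, invoke Herstein's structure theory for the Lie structure of the skew (respectively symmetric) elements of a prime ring with involution, and conclude with the Amitsur--Levitzki theorem. Since a Lie nilpotent set is automatically Lie $n$-Engel for some $n$, it suffices to treat the Engel hypothesis. I will describe the argument for $R^-$; the case of $R^+$ is entirely parallel, using the corresponding Herstein results for symmetric elements.

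First I would reduce to $\star$-prime rings. Because $R$ is semiprime, it is a subdirect product of prime rings $R/P$, where $P$ ranges over the prime ideals of $R$ and $\bigcap_P P=0$. For each such $P$ the ideal $P^\star$ is again prime, so $Q=P\cap P^\star$ is a $\star$-invariant ideal, the involution descends to $R/Q$, and $\bigcap_P Q=0$. Since $2$ is invertible, the image of $R^-$ in $R/Q$ is exactly $(R/Q)^-$, and it inherits the Lie $n$-Engel property. Thus if one proves, for every such quotient, that $[(R/Q)^-,(R/Q)^-]=0$ and that $R/Q$ satisfies $St_4$, then evaluating $[x,y]$ on $R^-$ and $St_4$ on $R$ lands in $\bigcap_P Q=0$, yielding both conclusions for $R$ itself. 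Each $R/Q$ is $\star$-prime, hence either prime with an induced involution, or of exchange type $A\oplus A^{\mathrm{op}}$ with $A$ prime and $(a,b)^\star=(b,a)$.

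In the exchange case the map $a\mapsto(a,-a)$ is a Lie isomorphism of $A$ onto $R^-$, so the Engel hypothesis on $R^-$ says precisely that the prime ring $A$ satisfies an Engel identity on all of $A$. A prime ring of characteristic $\neq 2$ satisfying an Engel condition on the whole ring is commutative, whence $R=A\oplus A^{\mathrm{op}}$ is commutative; then $[R^-,R^-]=0$ is immediate and $St_4$ holds trivially. In the prime-with-involution case I would appeal to Herstein's theory of the Lie structure of $R^-$, together with its refinements by Lanski and Montgomery: for a prime ring of characteristic $\neq 2$, an Engel condition on the skew elements forces $[R^-,R^-]=0$ and bounds the dimension of $R$ over its extended centroid $C$ by $\dim_C R\le 4$. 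Either $R$ is commutative, and there is nothing further to prove, or $\dim_C R=4$, so that $R$ embeds into a four-dimensional central simple $C$-algebra; after extending scalars to a splitting field such an algebra becomes $M_2$ over a field, which satisfies $St_4$ by Amitsur--Levitzki. Since a polynomial identity passes to subrings, $R$ satisfies $St_4$.

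The substantive obstacle is the prime-with-involution step: the passage from an Engel (or nilpotency) condition on $R^-$ to the twin conclusions $[R^-,R^-]=0$ and $\dim_C R\le 4$ is exactly where the hard structure theory of Herstein on the skew and symmetric elements of prime rings is used, and it is the part that cannot be shortcut. By contrast, the subdirect decomposition, the exchange case, and the final invocation of Amitsur--Levitzki are routine once that core statement is in hand.
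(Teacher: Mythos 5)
The first thing to note is that the paper contains no proof of this lemma for you to be compared against: it sits in Section~2, ``Prerequisites and assumed results,'' and is quoted verbatim from Giambruno--Sehgal (the reference [GiSe, Theorem 1]) and Giambruno--Polcino Milies--Sehgal ([GPS:09, Lemma 2.4]). So your proposal can only be measured against the literature it tries to reconstruct. Its overall shape --- reduction of the semiprime case to $\star$-prime quotients, hard structure theory of Herstein/Lanski/Montgomery in the prime case, and Amitsur--Levitzki to get $St_4$ --- is indeed the strategy of those papers, and openly deferring the prime-with-involution core to citations is no worse than what the paper itself does, since it defers the entire lemma.

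There is, however, a genuine gap in your exchange case. The dichotomy you invoke --- a $\star$-prime ring is either prime or of exchange type $A\oplus A^{\mathrm{op}}$ --- is the classification of $\star$-\emph{simple} rings; it is false for $\star$-prime rings. Your quotient $R/Q$, with $Q=P\cap P^\star$ and $P\neq P^\star$, is only a $\star$-invariant \emph{subdirect} product inside $A\oplus A^{\mathrm{op}}$ (where $A=R/P$), and for a proper subdirect product $S$ the elements $(a,-a)$, $a\in A$, need not lie in $S$ at all: one only gets $S^-=\{(c,-c):c\in C\}$ with $C=\{\tfrac{1}{2}(a-b):(a,b)\in S\}$ a possibly proper additive subgroup of $A$. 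Hence the Engel hypothesis gives an Engel condition only on $C$, and your key step --- ``the prime ring $A$ satisfies an Engel identity on all of $A$, hence is commutative'' --- does not apply. The standard repair argues inside $R/Q$ instead: the ideals $I=P^\star/Q$ and $I^\star=P/Q$ are nonzero and satisfy $II^\star=I^\star I=0$ and $I\cap I^\star=0$; $I$ is a prime ring (a nonzero ideal of the prime ring $R/P$); the map $x\mapsto x-x^\star$ is an injective Lie homomorphism of $I$ into $(R/Q)^-$ because the cross brackets vanish, so $I$ inherits the Engel condition and is commutative; a commutative ideal of a semiprime ring is central; and since $I\oplus I^\star$ is a $\star$-ideal with zero annihilator that is annihilated by every commutator $[r,r']$, the quotient $R/Q$ is commutative, which is exactly the conclusion you wanted in this case. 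Two smaller points: $2R=R$ does not literally say $2$ is invertible --- you need the standard observation that a semiprime ring with $2R=R$ is $2$-torsion free (if $2a=0$ then $aRa=a(2R)a=(2a)Ra=0$, so $a=0$), after which, in the unital setting of the paper, $\tfrac{1}{2}\in R$ and your identification of the image of $R^-$ with $(R/Q)^-$ is legitimate; and with the repair above, the rest of your reduction and your prime case fit together correctly.
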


\section{Group involution}

Let $\mathbb{F}$ be a field with $\car(\mathbb{F})\neq 2$ and let $G$ be a group. Suppose that $\ast: G\rightarrow G$ is a function satisfying $(gh)^\ast=h^\ast g^\ast$ and $(g^\ast)^\ast=g$ for all $g, h\in G$. Extending it $\mathbb{F}$-linearly, we obtain an involution on $\mathbb{F}G$, the so-called induced involution (obviously, the classical involution is the one induced from $g\mapsto g^{-1}$ on $G$). We denote $G^+=\{g\in G: g^\ast=g\}$ the set of symmetric elements of $G$ under $\ast$. Now, suppose that the group algebra $\mathbb{F}G$ is normal with respect to the group involution $\ast$, i.e., $\mathbb{F}G$ satisfies the $\ast$-PI $\alpha\alpha^\ast=\alpha^\ast\alpha$.

We begin this section with some lemmas, results where we establish necessary conditions under which the group algebra $\mathbb{F}G$
is normal.

\begin{lem}\label{lem3}
Suppose that $\mathbb{F}G$ is normal and let $g, h \in G$. Then either $gh=hg$ or $gh=g^\ast h^\ast$.
\end{lem}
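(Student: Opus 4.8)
The plan is to test the normality identity $\alpha\alpha^\circledast=\alpha^\circledast\alpha$ on a cleverly chosen element $\alpha$ built from the two group elements $g,h$, and then to read off the stated dichotomy by comparing coefficients of group elements on both sides. The natural first choice is $\alpha=g+h$, since this is the simplest element whose normality condition should couple $g$ and $h$. Writing $\circledast$ in the form \eqref{eq0}, we have $\alpha^\circledast=\sigma(g)g^{\ast}+\sigma(h)h^{\ast}$, and I would expand both $\alpha\alpha^\circledast$ and $\alpha^\circledast\alpha$ as $\mathbb{F}$-linear combinations of products of group elements. Setting the two expansions equal yields an equation in $\mathbb{F}G$, and the whole argument reduces to extracting information from the equality of coefficients.

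The key computation is that $\alpha\alpha^\circledast-\alpha^\circledast\alpha$ collects into cross terms. Concretely, the ``diagonal'' contributions $\sigma(g)gg^{\ast}$ and $\sigma(h)hh^{\ast}$ appear identically on both sides and cancel, leaving only the mixed terms: on the left $\sigma(h)gh^{\ast}+\sigma(g)hg^{\ast}$ and on the right $\sigma(h)h^{\ast}g+\sigma(g)g^{\ast}h$. So the identity becomes
\begin{equation}\label{eq:crossterms}
\sigma(h)gh^{\ast}+\sigma(g)hg^{\ast}=\sigma(h)h^{\ast}g+\sigma(g)g^{\ast}h.
\end{equation}
Now I would compare the group elements appearing on the two sides of \eqref{eq:crossterms}. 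The support on the left is contained in $\{gh^{\ast},hg^{\ast}\}$ and on the right in $\{h^{\ast}g,g^{\ast}h\}$, and since $\ast$ is an anti-automorphism one has $(gh^{\ast})^{\ast}=hg^{\ast}$ and $(g^{\ast}h)^{\ast}=h^{\ast}g$, so these supports are each closed under $\ast$. The strategy is then a case analysis according to which group elements coincide among $gh^{\ast}$, $hg^{\ast}$, $h^{\ast}g$, $g^{\ast}h$. If the left-hand support meets the right-hand support, matching, say, $gh^{\ast}=h^{\ast}g$ or $gh^{\ast}=g^{\ast}h$ leads after cancellation to one of the desired conclusions $gh=hg$ or $gh=g^{\ast}h^{\ast}$; if instead the supports are disjoint, then each coefficient must vanish, which is impossible over a field where the relevant $\sigma$-values are units, forcing the elements to collapse and again yielding the dichotomy.

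The main obstacle I anticipate is the bookkeeping in the degenerate cases, where two or more of the four group elements $gh^{\ast},hg^{\ast},h^{\ast}g,g^{\ast}h$ happen to coincide, so that coefficients add rather than match individually; here the hypothesis $\car(\mathbb{F})\neq 2$ is essential to prevent a spurious cancellation such as $1+1=0$, and the values $\sigma(g),\sigma(h)\in\{\pm1\}$ must be tracked carefully since they may differ in sign. A secondary subtlety is that the conclusion $gh=g^{\ast}h^{\ast}$ is exactly $(gh)=(hg)^{\ast}$ rewritten, so one should keep in mind the identity $(hg)^{\ast}=g^{\ast}h^{\ast}$ when translating between the coefficient equations and the two alternatives in the statement. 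Once the correct matching is identified in each case, each branch reduces to an elementary manipulation of group elements, and the two possibilities $gh=hg$ and $gh=g^{\ast}h^{\ast}$ exhaust all cases.
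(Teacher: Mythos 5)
There is a genuine gap, and it has two sources. First, the setting: Lemma \ref{lem3} sits in Section~3, where ``normal'' refers to the plain linear extension of the group involution $\ast$; no orientation is present. Carrying $\sigma$ through the argument is not merely cosmetic, because in the oriented setting the stated dichotomy is actually \emph{false} in general: when $\sigma(g)\sigma(h)=-1$ the correct conclusion is the one in Lemma \ref{lem6}(2), namely $gh=hg$ or $gh=(gh)^\ast=h^\ast g^\ast$, which is a genuinely different alternative from $gh=g^\ast h^\ast$. (Concretely, for the dihedral group of order $8$ with $N$ the rotation subgroup, $\ast$ acting as inversion on $N$ and fixing the reflections, $\mathbb{F}G$ is $\circledast$-normal, yet a rotation $r$ of order $4$ and a reflection $b$ satisfy neither $rb=br$ nor $rb=r^\ast b^\ast$.) So your closing claim that ``the two possibilities $gh=hg$ and $gh=g^\ast h^\ast$ exhaust all cases'' cannot be established once the signs $\sigma(g),\sigma(h)$ are allowed to differ.

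Second, and more fundamentally, even with $\sigma$ trivial your test element $\alpha=g+h$ produces the wrong cross terms. Your identity becomes $gh^\ast+hg^\ast=g^\ast h+h^\ast g$: every product occurring there mixes one starred and one unstarred factor, whereas the desired conclusion concerns the products $gh$, $hg$, $h^\ast g^\ast=(gh)^\ast$ and $g^\ast h^\ast=(hg)^\ast$. Consequently the matchings do \emph{not} ``lead after cancellation'' to the conclusion: for instance $gh^\ast=h^\ast g$ says only that $g$ commutes with $h^\ast$, which for a general involution is neither $gh=hg$ nor $gh=g^\ast h^\ast$, and no cancellation converts one into the other. What your computation actually proves is the dichotomy for the pair $(g,h^\ast)$: for all $g,h\in G$, either $gh^\ast=h^\ast g$ or $gh^\ast=g^\ast h=g^\ast (h^\ast)^\ast$. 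This is salvageable, since $\ast$ is a bijection of $G$: replacing $h$ by $h^\ast$ throughout yields the lemma. Equivalently, and this is the paper's one-line trick, test normality on $\alpha=g+h^\ast$ instead; then, after cancelling $gg^\ast=g^\ast g$ and $hh^\ast=h^\ast h$, one gets $gh+(gh)^\ast=hg+(hg)^\ast$, and comparison of supports immediately gives $gh=hg$ or $gh=(hg)^\ast=g^\ast h^\ast$. The missing idea is precisely this starring of one of the two elements (or, equivalently, the final relabeling $h\mapsto h^\ast$); without it, your case analysis terminates in statements about $g$ and $h^\ast$ rather than about $g$ and $h$.
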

\begin{proof}
As $\mathbb{F}G$ is normal, we have that $(g+h^\ast)(g^\ast+h)=(g^\ast+h)(g+h^\ast)$. Since $g^\ast g=gg^\ast$ and
$hh^\ast=h^\ast h$, we get
$$
gh+h^\ast g^\ast=hg+g^\ast h^\ast,
$$
and since these are just group elements, $gh=hg$ or $gh=g^\ast h^\ast$, as required.
\end{proof}

\begin{lem}\label{lem4}
Suppose that $\mathbb{F}G$ is normal. Then for all $g\in G$, $g^2\in \zeta(G)$.
\end{lem}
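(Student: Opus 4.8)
The plan is to deduce everything from Lemma \ref{lem3} together with the single-element consequence of normality, applied to cleverly chosen pairs. First I would record the basic identity obtained by evaluating normality at $\alpha=g$: since $\alpha\alpha^\ast=gg^\ast$ and $\alpha^\ast\alpha=g^\ast g$, normality forces $gg^\ast=g^\ast g$ for every $g\in G$ (this is exactly the fact already used silently in the proof of Lemma \ref{lem3}).

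Now fix $g,h\in G$; the goal is $g^2h=hg^2$. If $gh=hg$ this is immediate, so assume $g$ and $h$ do not commute. By Lemma \ref{lem3} we then have $gh=g^\ast h^\ast$. Next I would feed the pair $(g,gh)$ into Lemma \ref{lem3}: since $g$ and $gh$ commute if and only if $g$ and $h$ do, they do not commute, so the lemma yields $g(gh)=g^\ast(gh)^\ast=g^\ast h^\ast g^\ast$; substituting $g^\ast h^\ast=gh$ and cancelling one $g$ on the left gives the key relation $gh=hg^\ast$, i.e. $g^\ast=h^{-1}gh$. Interchanging the roles of $g$ and $h$ (using the pair $(h,hg)$ in the same way) gives the companion relation $h^\ast=g^{-1}hg$.

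To finish I would exploit the pair $(g,hg)$, which again consists of non-commuting elements, so Lemma \ref{lem3} gives $g(hg)=g^\ast(hg)^\ast=(g^\ast)^2h^\ast$. On the left side I rewrite $ghg=(hg^\ast)g=hgg^\ast$ using $gh=hg^\ast$ and $g^\ast g=gg^\ast$; on the right I substitute $h^\ast=g^{-1}hg$. After right-multiplying by $g^{-1}$ and simplifying with $gg^\ast g^{-1}=g^\ast$ and $hg^\ast h^{-1}=g$ (the latter from $g^\ast=h^{-1}gh$), the relation collapses to $(g^\ast)^2=g^2$. Combined with $(g^\ast)^2=(h^{-1}gh)^2=h^{-1}g^2h$, this yields $g^2=h^{-1}g^2h$, that is $g^2h=hg^2$, as desired.

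The routine verifications are merely cancellations in the group, so the real difficulty is organizational: many of the obvious ways of combining the relations $gg^\ast=g^\ast g$, $gh=g^\ast h^\ast$ and $g^\ast=h^{-1}gh$ simply reproduce one another and collapse to tautologies. The main obstacle is therefore to identify the one auxiliary pair — here $(g,hg)$, producing $ghg=(g^\ast)^2h^\ast$ — that carries genuinely new information and forces $(g^\ast)^2=g^2$; everything else is bookkeeping.
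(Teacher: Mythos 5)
Your proof is correct: each application of Lemma \ref{lem3} is to a pair that genuinely fails to commute, and the algebra checks out --- the identity $ghg=hgg^\ast=(g^\ast)^2h^\ast$, combined with $g^\ast=h^{-1}gh$ and $h^\ast=g^{-1}hg$, does force $(g^\ast)^2=g^2$ and hence $g^2h=hg^2$. Your route and the paper's start identically: both apply Lemma \ref{lem3} to the non-commuting pair $(g,gh)$ to obtain $g^2h=g^\ast h^\ast g^\ast$. They then diverge. The paper rewrites the right-hand side as $(hg)^\ast g^\ast$ and applies Lemma \ref{lem3} \emph{once more, to the starred pair} $\bigl((hg)^\ast, g^\ast\bigr)$: these cannot commute (that would force $gh=hg$), so the lemma yields $(hg)^\ast g^\ast=\bigl((hg)^\ast\bigr)^\ast\bigl(g^\ast\bigr)^\ast=hg\cdot g=hg^2$, finishing in two applications with no need for conjugation formulas. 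Your version instead works through four pairs and extracts the relations $g^\ast=h^{-1}gh$, $h^\ast=g^{-1}hg$ and $(g^\ast)^2=g^2$; this is longer but not wasteful, since these are exactly the facts the paper must re-derive later anyway, in the proof of Theorem \ref{teor3}, to identify the unique commutator $s=g^{-1}g^\ast$. The one trick you did not find --- feeding a starred pair back into Lemma \ref{lem3}, so that $(x^\ast)^\ast=x$ strips the stars for free --- is what buys the paper its two-line proof.
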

\begin{proof}
Assume that $g\in G$ is not central and let $h\in G$ such that $gh\neq hg$. Lemma $\ref{lem3}$, applied to the non-commuting
elements $g$ and $gh$, yields that $g^2h=g^\ast(gh)^\ast=(g^\ast h^\ast)g^\ast$. Then,
$$
g^2h=g^\ast(h^\ast g^\ast)=(hg)^\ast g^\ast.
$$
Since $g^\ast, (hg)^\ast\in G$ do not commute, then again by Lemma $\ref{lem3}$, $(hg)^\ast g^\ast=(hg)g$. Therefore,
$g^2h=hg^2$ and hence $g^2\in \zeta(G)$.
\end{proof}

\begin{lem}\label{lem5}
Suppose that $\mathbb{F}G$ is normal. Then $G^+\subseteq \zeta(G)$. In particular, $gg^\ast=g^\ast g \in \zeta(G)$, for all
$g\in G$.
\end{lem}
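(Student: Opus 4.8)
The plan is to argue by contradiction, leveraging Lemmas \ref{lem3} and \ref{lem4} to force any symmetric non-central element into an impossible relation. First I would take $g\in G^+$ and suppose, towards a contradiction, that $g\notin\zeta(G)$; then there exists $h\in G$ with $gh\neq hg$. Applying Lemma \ref{lem3} to the non-commuting pair $g,h$ rules out the first alternative, so $gh=g^\ast h^\ast=gh^\ast$ (using $g^\ast=g$), whence $h=h^\ast$, i.e.\ $h\in G^+$ as well. The point of this first step is that any element failing to commute with a symmetric $g$ must itself be symmetric.

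Next I would feed a cleverly chosen second pair into Lemma \ref{lem3}, namely $gh$ and $g$. These do not commute: since $g^2\in\zeta(G)$ by Lemma \ref{lem4}, the identity $(gh)g=g(gh)$ would read $ghg=g^2h$, and cancelling $g$ on the left gives $hg=gh$, contrary to the choice of $h$. Hence Lemma \ref{lem3} forces $(gh)g=(gh)^\ast g^\ast$. Computing the right-hand side with $g^\ast=g$ and $h^\ast=h$ yields $(gh)^\ast g^\ast=(h^\ast g^\ast)g^\ast=hg^2$, so that $ghg=hg^2$. Invoking once more the centrality of $g^2$ to rewrite $hg^2=g^2h$ and cancelling $g$ on the left produces $hg=gh$, the desired contradiction. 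Therefore no such $h$ exists, $g$ is central, and $G^+\subseteq\zeta(G)$.

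For the final assertion I would observe that for any $g\in G$ the element $gg^\ast$ is symmetric, since $(gg^\ast)^\ast=(g^\ast)^\ast g^\ast=gg^\ast$; by the part just proved it therefore lies in $\zeta(G)$. Centrality of $gg^\ast$ then gives $(gg^\ast)g=g(gg^\ast)$, i.e.\ $gg^\ast g=g^2g^\ast$, and cancelling $g$ on the left yields $g^\ast g=gg^\ast$. Thus $gg^\ast=g^\ast g\in\zeta(G)$ for all $g\in G$.

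The main obstacle I anticipate is purely in the choice of the second pair to which Lemma \ref{lem3} is applied: the first step only upgrades $h$ to a symmetric element, and some care is needed to select a product (here $gh$) whose non-commutativity with $g$ can be certified directly from Lemma \ref{lem4}, and whose image under Lemma \ref{lem3} collapses, via the centrality of $g^2$, to the contradiction $gh=hg$. Once that pairing is identified, the remaining manipulations are routine cancellations in $G$.
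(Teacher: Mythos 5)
Your proof is correct, and its overall shape (contradiction, with the first step upgrading $h$ to a symmetric element via Lemma \ref{lem3}) matches the paper's; but the step that produces the contradiction is organized differently. The paper applies normality directly to $\alpha = gh+g$, expands $(gh+g)(hg+g)=(hg+g)(gh+g)$, cancels $gh^2g$ against $hg^2h$ using Lemma \ref{lem4} (both equal the central element $g^2h^2$), and is left with $2ghg=2g^2h$, at which point $\car(\mathbb{F})\neq 2$ is invoked. You instead apply Lemma \ref{lem3} a second time, to the pair $(gh,g)$, obtaining $ghg=(gh)^\ast g^\ast=hg^2$, and then use Lemma \ref{lem4} to commute $g^2$ past $h$ and cancel. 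These are underpinned by the same normality instance --- Lemma \ref{lem3} applied to $(gh,g)$ is precisely normality of $gh+g^\ast=gh+g$ --- but your packaging is shorter, skips the four-term expansion, and needs no division by $2$ at this stage, since the dichotomy of Lemma \ref{lem3} has already absorbed that bookkeeping; it also mirrors the paper's own proof of Lemma \ref{lem4}, which applies Lemma \ref{lem3} to $g$ and $gh$ in exactly this way. Two minor remarks: (i) your appeal to Lemma \ref{lem4} when checking that $gh$ and $g$ do not commute is unnecessary, since $g(gh)=g^2h$ is mere associativity and the cancellation requires no centrality; (ii) you prove the ``in particular'' clause explicitly via symmetry of $gg^\ast$, which the paper leaves implicit --- note it also follows at once from normality evaluated at the single group element $g$.
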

\begin{proof}
Let $g\in G^+$ and suppose that there exists $h\in G$ such that $gh\neq hg$. By Lemma $\ref{lem3}$,
$$
gh=g^\ast h^\ast=gh^\ast,
$$
forcing $h\in G^+$. Then, again as $\mathbb{F}G$ is normal,

\begin{align*}
(gh+g)(hg+g)   &= (hg+g)(gh+g) \nonumber \\
gh^2g+2ghg+g^2 &= hg^2h+hg^2+g^2h+g^2.
\end{align*}

By Lemma $\ref{lem4}$, $g^2, h^2\in \zeta(G)$, hence $2ghg=2g^2h$ and since the characteristic is not $2$, $gh=hg$, a contradiction.
Hence $g\in \zeta(G)$, as desired.
\end{proof}

In the following result we establish necessary and sufficient conditions on $G$ under which the group algebra $\mathbb{F}G$ is
normal with respect to group involution $\ast$. Clearly, if $G$ is an abelian group, then $\mathbb{F}G$ is commutative and thus
for all $\alpha\in \mathbb{F}G$, $\alpha\alpha^\ast=\alpha^\ast\alpha$.

\begin{teor}\label{teor3}
Let $\mathbb{F}$ be a field with $\car(\mathbb{F})\neq 2$ and let $G$ be a non-abelian group with an involution $\ast$ extended
$\mathbb{F}$-linearly to $\mathbb{F}G$. The following conditions are equivalent:
\begin{enumerate}
 \item[$1.$] $\mathbb{F}G$ is normal;
 \item[$2.$] $G$ is an SLC-group;
 \item[$3.$] $\mathbb{F}G^+$ is commutative.

\end{enumerate}
\end{teor}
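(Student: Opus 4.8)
The plan is to prove the chain $(2)\Leftrightarrow(3)$ and $(2)\Rightarrow(1)$ cheaply, and to concentrate all the real work on $(1)\Rightarrow(2)$. Since $G$ is non-abelian and $\car(\mathbb{F})\neq2$, the equivalence $(2)\Leftrightarrow(3)$ is nothing but Lemma~\ref{lem1} applied with $R=\mathbb{F}$: the set $\mathbb{F}G^+$ is commutative exactly when $G$ is an SLC-group. So it only remains to tie normality to the SLC condition.

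For $(2)\Rightarrow(1)$ I would argue by polarization. Because $\car(\mathbb{F})\neq 2$, the identity $\alpha\alpha^\ast=\alpha^\ast\alpha$ for all $\alpha$ is equivalent to the vanishing of the $\mathbb{F}$-bilinear form $B(\alpha,\beta)=\alpha\beta^\ast+\beta\alpha^\ast-\beta^\ast\alpha-\alpha^\ast\beta$, and by bilinearity it suffices to check $B(g,h)=0$ on group elements $g,h\in G$. Inserting the canonical involution \eqref{eq1} and splitting into three cases (both $g,h$ central; exactly one central; both non-central), each case collapses to $0$ once one uses that the distinguished commutator $s$ is central of order $2$; this is a short, routine computation.

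The substance is $(1)\Rightarrow(2)$, which I would build directly on Lemmas~\ref{lem3}--\ref{lem5}. First, since $gg^\ast\in\zeta(G)$ and $g^2\in\zeta(G)$ for every $g$, the element $s_g:=g^\ast g^{-1}=(gg^\ast)g^{-2}$ is central and $g^\ast=s_gg$; thus the involution is already ``of canonical shape'' up to identifying the scalars $s_g$. Next, for a non-commuting pair $g,h$ I would extract the key numerical facts from Lemma~\ref{lem3}: applying it to $(g,h)$ gives $s_gs_h=1$; applying it to $(g^\ast,h)$ gives $s_g=s_h$, whence $s_g^2=1$ and (since $s_g=1$ would force $g\in G^+\subseteq\zeta(G)$) $s_g$ has order $2$; and applying it to $(g,gh)$ yields $hg=s_g\,gh$, i.e. $(g,h)=s_g$. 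Consequently, for each non-central $g$ the map $x\mapsto(g,x)$ is a homomorphism $G\to G'\le\zeta(G)$ taking only the values $1$ and $s_g$, so $C_G(g)$ has index $2$. Because a group is never the union of two proper subgroups, any two non-central elements $g,g'$ admit a common non-commuting partner $k$, giving $s_g=s_k=s_{g'}$; hence all $s_g$ coincide with a single central involution $s$, and $G'=\{1,s\}$. Finally, reading $(zg)^\ast=g^\ast z^\ast$ for central $z$ and non-central $g$ forces $s_z=1$, i.e. $z^\ast=z$, so the involution is exactly the canonical one of \eqref{eq1}.

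It remains to show $G/\zeta(G)\cong C_2\times C_2$, and this is the step I expect to be the main obstacle, since the facts gathered so far are consistent with \emph{any} nondegenerate alternating form on $G/\zeta(G)$ and hence do not by themselves bound its dimension. The extra input is that $\ast$ is a genuine anti-automorphism: if $g,h$ are non-central, commuting, with $gh$ non-central, then $(gh)^\ast=h^\ast g^\ast$ together with the canonical shape gives $s\,gh=hg=gh$, i.e. $s=1$, a contradiction; hence commuting non-central elements always have central product, equivalently equal images in $G/\zeta(G)$. Encoding ``commute'' as a nondegenerate alternating $\mathbb{F}_2$-form on the elementary abelian $2$-group $G/\zeta(G)$, this last property is incompatible with dimension $\ge 3$: for independent $\bar g,\bar h,\bar k$ the element $gh$ is non-central and centralizes $k$ (because $(k,gh)=(k,g)(k,h)=s^2=1$), yet $\bar k\neq\overline{gh}$ are distinct and nonzero, so they would have to fail to commute. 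Therefore $\dim=2$ and $G/\zeta(G)\cong C_2\times C_2$. By \cite[Proposition III.3.6]{GJP:96} this makes $G$ an LC-group with a unique non-identity commutator, and combined with the canonical involution already established, $G$ is an SLC-group, completing $(1)\Rightarrow(2)$.
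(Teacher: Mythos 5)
Your proposal is correct, and its core --- the direction $(1)\Rightarrow(2)$ --- runs along essentially the same lines as the paper's: both arguments rest on Lemmas~\ref{lem3}--\ref{lem5} to show that the involution has the canonical shape ($g^\ast=sg$ off the center), that $s$ is central of order $2$ and is the unique non-identity commutator, that $\ast$ is trivial on $\zeta(G)$, and that commuting non-central elements have central product; the equivalence $(2)\Leftrightarrow(3)$ is Lemma~\ref{lem1} in both. Three local differences are worth recording. First, for $(2)\Rightarrow(1)$ you polarize: normality is equivalent to the vanishing of the bilinear map $B(\alpha,\beta)=\alpha\beta^\ast+\beta\alpha^\ast-\beta^\ast\alpha-\alpha^\ast\beta$, which reduces the check to group elements and three short cases; the paper instead splits $\alpha=\beta+\gamma$ into central and non-central support and computes $\alpha\alpha^\ast=\alpha^\ast\alpha$ directly. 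These are the same computation in different clothing, and both make visible that only the centrality of $s$ and of the centrally supported part is used. Second, in proving $G'=\{1,s\}$ you supply the ``common non-commuting partner'' argument (a group is never the union of two proper subgroups, and each $C_G(g)$ has index $2$) to identify $s_g=s_{g'}$ when the non-central elements $g,g'$ happen to commute; the paper passes over this case silently, so your write-up is actually more complete at that point. Third, your final step --- bounding $\dim G/\zeta(G)$ over the field of two elements via the alternating-form argument --- is correct but redundant: by the paper's definition, $G$ is SLC as soon as it is LC, has a unique non-identity commutator, and carries the canonical involution, and you had all three in hand before that step (your statement that commuting non-central elements have central product is exactly the nontrivial half of the LC property, and $G'=\{1,s\}$ was already established). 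The isomorphism $G/\zeta(G)\cong C_2\times C_2$ is what \cite[Proposition III.3.6]{GJP:96} yields for free from LC plus unique commutator; it is not an obstacle you need to clear independently, so the step you flagged as ``the main obstacle'' can simply be deleted.
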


\begin{proof}
In view of Lemma \ref{lem1}, it remains only to prove that $(1)$ is equivalent to $(2)$.

Assume that $\mathbb{F}G$ is normal. Let $g\in G\setminus \zeta(G)$ and let $h\in G$ such that $gh\neq hg$. Then $gh$ does not
commute with $h^{-1}$ and so, by Lemma $\ref{lem3}$,
$$
h^{-1}(gh)=(h^{-1})^\ast(gh)^\ast=(h^\ast)^{-1}(h^\ast g^\ast)=g^\ast.
$$
Thus, $g^{-1}h^{-1}gh=g^{-1}g^\ast$ and so $s=g^{-1}g^\ast \in G'$

Since, by Lemma \ref{lem4} $g^2\in \zeta(G)$,  we have that $G/\zeta(G)$ is an elementary abelian $2$-group. Then
$G'\subseteq \zeta(G)$ and therefore $s$ is central. Moreover,
\begin{align*}
s^2  &= (g^{-1}h^{-1}gh)(g^{-1}h^{-1}gh) \nonumber \\
     &= g^{-1}(g^{-1}h^{-1}gh)(h^{-1}gh) \nonumber \\
     &= g^{-2}(h^{-1}g^2)h \nonumber \\
     &= g^{-2}(g^2h^{-1})h \tag{$g^2\in \zeta(G)$} \nonumber \\
     &= 1.
\end{align*}
Since $s\in \zeta(G)$, $s^2=1$ and $s=g^{-1}g^\ast$ we get that $(g^\ast)^2=g^2$ and so, $g^{-1}g^\ast=g(g^\ast)^{-1}$. Because
of Lemma $\ref{lem3}$, $hg=h^\ast g^\ast$, and thus we obtain
$$
h^{-1}h^\ast=g(g^\ast)^{-1}=g^{-1}g^\ast=s,
$$
for all $g, h\in G$ such that $gh\neq hg$ and thus $G'=\{1, s\}$, i.e., $s$ is the unique non-identity commutator.

Notice that if $g\notin \zeta(G)$, from $s=g^{-1}g^\ast$ we get $g^\ast=sg$. Moreover, if $g\in \zeta(G)$ and $h\notin \zeta(G)$,
then $gh\notin \zeta(G)$. Thus,
$$
shg\underbrace{=}_{g\in \zeta(G)}sgh=(gh)^*=h^*g^*=shg^*.
$$
Hence $g^\ast=g$ and thus the involution $\ast$ is trivial on $\zeta(G)$ and only on $\zeta(G)$.

Next we show that $G$ has the LC-property. Let $g, h\in G$ be such that $gh=hg$ but $g, h\notin \zeta(G)$. Then,
$$
(gh)^\ast=h^\ast g^\ast=(sh)(sg)=s^2hg=gh.
$$
Hence $gh\in G^+$ and thus, by Lemma $\ref{lem5}$, $gh\in \zeta(G)$, as desired. Thus $(1)$ implies $(2)$.

We now prove that $(2)$ implies $(1)$. Assume that $G$ has the LC-property and a unique non-identity commutator $s$. Since $\ast$
is given by expression \eqref{eq1}, if $g\notin \zeta(G)$ then $g^\ast=sg$ and so, for all $\alpha \in \mathbb{F}G$,
$$
\alpha=\sum_{g\in G}\alpha_gg=\sum_{z\in\zeta(G)}\alpha_zz+\sum_{g\notin \zeta(G)}\alpha_gg=\beta+\gamma.
$$
It follows that $\alpha^\ast=\beta+s\gamma$. Thus,
\begin{align*}
\alpha\alpha^* &=(\beta+\gamma)(\beta+s\gamma) \nonumber \\
               &=\beta^2+s\beta\gamma+\gamma\beta+s\gamma^2 \nonumber \\
               &=\beta^2+s\gamma\beta+\beta\gamma+s\gamma^2 \tag{$\beta\in \zeta(\mathbb{F}G)$} \nonumber \\
               &=(\beta+s\gamma)\beta+(\beta+s\gamma)\gamma \nonumber \\
               &=(\beta+s\gamma)(\beta+\gamma) \nonumber \\
               &=\alpha^*\alpha.
\end{align*}
Hence, $\mathbb{F}G$ is normal.
\end{proof}

\section{Oriented group involutions}

Let $\mathbb{F}$ be a field and let $G$ be a group with a non-identity homomorphism $\sigma:G\rightarrow \{\pm1\}$ and an
involution $\ast:G\rightarrow G$. Since $\sigma$ is non-identity, $\car(\mathbb{F})$ must be different from $2$. Let $\circledast:\mathbb{F}G\rightarrow \mathbb{F}G$ denote the involution obtained as a linear extension of the involution $\ast$,
twisted by the homomorphism $\sigma:G\rightarrow \{\pm1\}$.

If $N$ denotes the kernel of $\sigma$, then $N$ is a subgroup in $G$ of index $2$. It is clear that the involution $\circledast$
coincides on the subalgebra $\mathbb{F}N$ with the group involution $\ast$. Also, we have that the symmetric elements in $G$,
under $\circledast$, are the symmetric elements in $N$ under $\ast$. If we denote the set of symmetric elements in $G$, under
$\circledast$, by $N^+$, then we can write $N^+=N\cap G^+$, where as mentioned earlier, $G^+$ denotes the set of symmetric elements
of G in regard to $\ast$. Thus, if $\mathbb{F}G$ satisfies a $\circledast$-PI, then $\mathbb{F}N$ satisfies a $\ast$-PI.

Now, suppose that $\mathbb{F}G$ is normal. Then $\mathbb{F}N$ is a normal group algebra. Hence, by Theorem \ref{teor3}, the structure
of $N$ and the action of $\ast$ on $N$ are known, more exactly, $N$ is abelian or SLC-group. With the aim to determine the structure
of $G$ and the action of $\circledast$ we shall begin with some lemmas.

\begin{lem}\label{lem6}
Let $g, h \in G$, and assume $\mathbb{F}G$ is normal. Then one of the following holds:
\begin{enumerate}
 \item[$1$.] If $\sigma(g)\sigma(h)=1$, then either $gh=hg$ or $gh=g^\ast h^\ast$,
 \item[$2.$] If $\sigma(g)\sigma(h)=-1$, then either $gh=hg$ or $gh=(gh)^\ast$.

\end{enumerate}
\end{lem}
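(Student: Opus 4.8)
The plan is to mimic the structure of Lemma~\ref{lem3}, evaluating the normality identity on a carefully chosen element built from $g$ and $h$, and then splitting into the two sign cases. The key observation is that $\circledast$ picks up the orientation factor $\sigma$: for a group element $x$ we have $x^\circledast=\sigma(x)x^\ast$, so the sign $\sigma(g)\sigma(h)$ controls how the cross terms behave. I would first record that, just as in Lemma~\ref{lem3}, for any single $x\in G$ the normality identity forces $x$ to commute with $x^\circledast$; concretely $xx^\circledast=x^\circledast x$ reads $\sigma(x)xx^\ast=\sigma(x)x^\ast x$, so $xx^\ast=x^\ast x$, which is harmless but worth having on hand.

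The main computation is to expand $\alpha\alpha^\circledast=\alpha^\circledast\alpha$ for $\alpha=g+h^\circledast$, exactly the analogue of the proof of Lemma~\ref{lem3}. I would write $\alpha^\circledast=g^\circledast+h$ (since $\circledast$ is an involution, $(h^\circledast)^\circledast=h$), expand both products, and use the single-element commuting relations $gg^\circledast=g^\circledast g$ and $h^\circledast h=hh^\circledast$ to cancel the diagonal terms. What survives is an identity among the two genuine cross terms on each side, namely $g h + h^\circledast g^\circledast = h g + g^\circledast h^\circledast$. Now substitute $h^\circledast=\sigma(h)h^\ast$ and $g^\circledast=\sigma(g)g^\ast$. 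The term $h^\circledast g^\circledast$ carries the factor $\sigma(g)\sigma(h)$, and likewise $g^\circledast h^\circledast$. This is precisely where the case split on $\sigma(g)\sigma(h)=\pm1$ enters.

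In the case $\sigma(g)\sigma(h)=1$, the surviving identity becomes $gh+h^\ast g^\ast=hg+g^\ast h^\ast$, an equation among four group elements with coefficients $+1$; comparing supports as in Lemma~\ref{lem3} forces either $gh=hg$ or $gh=g^\ast h^\ast$, giving conclusion~(1). In the case $\sigma(g)\sigma(h)=-1$, the cross terms pick up a minus sign, so the identity reads $gh-h^\ast g^\ast=hg-g^\ast h^\ast$, equivalently $gh+g^\ast h^\ast=hg+h^\ast g^\ast$; matching supports now yields either $gh=hg$ or $gh=g^\ast h^\ast$ paired with $h^\ast g^\ast=g^\ast h^\ast$ --- but I should instead rearrange to isolate the intended conclusion $gh=(gh)^\ast=h^\ast g^\ast$. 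The cleanest route is to match $gh$ against the right-hand side: either $gh=hg$, or $gh=h^\ast g^\ast=(gh)^\ast$, which is exactly conclusion~(2).

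The step I expect to be delicate is the support-comparison argument in the mixed-sign case, because the presence of minus signs means one cannot naively equate monomials term by term: the equality $gh+g^\ast h^\ast=hg+h^\ast g^\ast$ in $\mathbb{F}G$ must be read as an equality of formal $\mathbb{F}$-linear combinations, and one must account for possible collisions (e.g. $gh=h^\ast g^\ast$ as group elements, or $g^\ast h^\ast=hg$) before concluding. I would handle this by treating each possible coincidence among the four group elements $gh, g^\ast h^\ast, hg, h^\ast g^\ast$ and checking that every consistent pattern collapses to one of the two stated alternatives; since $\car(\mathbb{F})\neq2$, no cancellation can artificially annihilate a coefficient, so the bookkeeping is finite and forced. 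This coefficient-comparison, rather than any deep structural fact, is the crux, and the rest follows formally from the expansion.
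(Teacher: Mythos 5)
Your proof is correct and is essentially the paper's own argument: both evaluate the normality identity on $\alpha=g+h^\circledast$, cancel the diagonal terms via $xx^\circledast=x^\circledast x$, and then split on the sign $\sigma(g)\sigma(h)$ to compare the resulting sums of group elements, reading case (2) as $gh+g^\ast h^\ast=hg+(gh)^\ast$. The support-comparison subtlety you flag in the mixed-sign case dissolves once the identity is rearranged so that all four terms carry coefficient $+1$ (with $\car(\mathbb{F})\neq 2$ guarding against cancellation), which is exactly how both you and the paper conclude.
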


\begin{proof}
By hypothesis $(g+h^\circledast)(g^\circledast+h)=(g^\circledast+h)(g+h^\circledast)$. Since $xx^\circledast=x^\circledast x$ for
all $x\in G$, we obtain that
$$
gh+h^\circledast g^\circledast=g^\circledast h^\circledast+hg.
$$
\begin{enumerate}
 \item[$1.$] If $\sigma(g)\sigma(h)=1$, then $gh+h^\ast g^\ast=g^\ast h^\ast+hg$ and, since $\car(\mathbb{F})\neq 2$ and as each
      side is a sum of group elements, the result follows.
 \item[$2.$] If $\sigma(g)\sigma(h)=-1$, then $gh+g^\ast h^\ast=hg+h^\ast g^\ast=hg+(gh)^\ast$, so either $gh=hg$ or $gh=(gh)^\ast$.
      This finishes the proof.
\end{enumerate}
\end{proof}

\begin{lem}\label{lem7}
Let $g,h\in G$ such that $gh\neq hg$ and assume $\mathbb{F}G$ is normal. Then one of the following holds:
  \begin{enumerate}
   \item[$1.$] If either $\sigma(g)=\sigma(h)=1$ or if $\sigma(g)=-1$ and $\sigma(h)=1$, then $g^2h=hg^2$,
   \item[$2.$] If either $\sigma(g)=1$ and $\sigma(h)=-1$ or if $\sigma(g)=\sigma(h)=-1$, then $g^2h=(g^2h)^*$.
  \end{enumerate}
In particular, for $m, n \in N$, $(m^2, n)=1$.
\end{lem}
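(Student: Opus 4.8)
The plan is to obtain both identities by applying Lemma \ref{lem6} to carefully chosen pairs, exactly as Lemma \ref{lem4} was deduced from Lemma \ref{lem3} in the unoriented setting. Two preliminary observations drive everything. First, since $gh\neq hg$, the pair $g$ and $gh$ does not commute: indeed $g(gh)=g^2h$ while $(gh)g=ghg$, and these agree only if $gh=hg$. Second, because $\circledast$ is an involution we have $gg^\ast\in N$ for every $g$, whence $\sigma(g^\ast)=\sigma(g)$; in particular $\sigma(g)\sigma(gh)=\sigma(g)^2\sigma(h)=\sigma(h)$. This last equality is the crux: the branch of Lemma \ref{lem6} that governs the pair $(g,gh)$ depends only on $\sigma(h)$, which is why the four hypotheses of the statement collapse into the two cases $\sigma(h)=1$ and $\sigma(h)=-1$, irrespective of $\sigma(g)$.

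For the case $\sigma(h)=1$ (the first alternative), I would first apply Lemma \ref{lem6}(1) to the non-commuting pair $(g,gh)$, whose $\sigma$-product is $\sigma(h)=1$, to get $g^2h=g^\ast(gh)^\ast=g^\ast h^\ast g^\ast$, and then regroup this as $(g^\ast h^\ast)g^\ast=(hg)^\ast g^\ast$. The second step is to apply Lemma \ref{lem6}(1) again, now to the pair $\big((hg)^\ast,g^\ast\big)$. These elements do not commute (two elements commute iff their $\ast$-images do, and $hg$, $g$ fail to commute precisely because $gh\neq hg$), and their $\sigma$-product is $\sigma(hg)\sigma(g)=\sigma(h)=1$; hence Lemma \ref{lem6}(1) yields $(hg)^\ast g^\ast=(hg)g=hg^2$. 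Combining the two displays gives $g^2h=hg^2$.

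For the case $\sigma(h)=-1$ (the second alternative) the argument is immediate: the pair $(g,gh)$ does not commute and has $\sigma$-product $\sigma(h)=-1$, so Lemma \ref{lem6}(2) forces $g(gh)=\big(g(gh)\big)^\ast$, i.e.\ $g^2h=(g^2h)^\ast$, with no regrouping required. Finally, for the ``in particular'' clause, take $m,n\in N$, so that $\sigma(m)=\sigma(n)=1$. If $mn=nm$ then $m^2$ visibly commutes with $n$; if $mn\neq nm$ then the first case (with $g=m$, $h=n$, noting $\sigma(n)=1$) gives $m^2n=nm^2$. Either way $(m^2,n)=1$.

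I expect the only real obstacle to be the bookkeeping in the second application of Lemma \ref{lem6} within the case $\sigma(h)=1$: one must be sure that $(hg)^\ast$ and $g^\ast$ genuinely fail to commute and that their $\sigma$-product is correctly computed as $\sigma(h)$, since choosing the wrong branch of the dichotomy there would derail the argument. Once the identity $\sigma(x^\ast)=\sigma(x)$ and the commutator cancellations are recorded, both cases follow by direct substitution.
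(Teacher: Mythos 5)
Your proof is correct and follows essentially the same route as the paper: apply Lemma \ref{lem6} to the non-commuting pair $(g,gh)$, and in the case $\sigma(h)=1$ apply Lemma \ref{lem6}(1) a second time to the pair $\bigl((hg)^\ast,g^\ast\bigr)$ to conclude $g^2h=hg^2$, while the case $\sigma(h)=-1$ follows in one step from Lemma \ref{lem6}(2). The only (harmless) deviation is the final clause, where the paper invokes Lemma \ref{lem4} applied to the normal group algebra $\mathbb{F}N$ to get $(m^2,n)=1$, whereas you deduce it directly from part 1 of the lemma; both are valid, and your explicit checks that $\sigma(x^\ast)=\sigma(x)$ and that $(hg)^\ast$, $g^\ast$ fail to commute make precise details the paper leaves implicit.
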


\begin{proof}
 We have the following possibilities:
\begin{enumerate}
 \item[$1.$] If either $\sigma(g)=\sigma(h)=1$ or if $\sigma(g)=-1$ and $\sigma(h)=1$, Lemma $\ref{lem6}(1)$, applied to the
      non-commuting elements $g$ and $gh$, yields that $g^2h=g(gh)=g^\ast(h^\ast g^\ast)=(hg)^\ast g^\ast$. Again by Lemma
      $\ref{lem6}(1)$ (applied to the non-commuting elements $(hg)^\ast$ and $g^\ast$), we obtain that
      $$
      g^2h=(hg)^*g^*=(hg)g=hg^2.
      $$
 \item[$2.$] If either $\sigma(g)=1$ and $\sigma(h)=-1$ or if $\sigma(g)=\sigma(h)=-1$, then by Lemma $\ref{lem6}(2)$, applied
      again to the non-commuting elements $g$ and $gh$, yields that
      $$
      g^2h=g(gh)=[g(gh)]^\ast=(g^2h)^\ast.
      $$
      In particular, as $\mathbb{F}N$ is normal it follows from Lemma $\ref{lem4}$ that $m^2\in \zeta(N)$.
\end{enumerate}
\end{proof}

\begin{lem}\label{lem8}
If $\mathbb{F}G$ is normal, then $N^+\subseteq \zeta(G)$. 
\end{lem}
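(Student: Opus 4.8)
The plan is to reduce the statement to combining the two orientation-sensitive relations already recorded in Lemmas \ref{lem6} and \ref{lem7}, after first disposing of the elements of $N$ by invoking the normality of $\mathbb{F}N$. Since $\circledast$ restricts to the group involution $\ast$ on the subalgebra $\mathbb{F}N$ and $\mathbb{F}G$ is normal, $\mathbb{F}N$ is itself normal; hence Lemma \ref{lem5}, applied to the group $N$ with involution $\ast$, yields $N^+\subseteq\zeta(N)$. Consequently any $g\in N^+$ already commutes with every element of $N$, and the whole problem reduces to showing that such a $g$ also commutes with the elements of $G$ lying outside $N$.

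First I would fix $g\in N^+$ (so that $\sigma(g)=1$ and $g^\ast=g$) and an arbitrary $h\in G\setminus N$ (so that $\sigma(h)=-1$), and argue by contradiction, assuming $gh\neq hg$. Because $\sigma(g)\sigma(h)=-1$, Lemma \ref{lem6}(2) forces $gh=(gh)^\ast=h^\ast g^\ast=h^\ast g$, the last equality using $g^\ast=g$; rewriting, $h^\ast=ghg^{-1}$. On the other hand, since $\sigma(g)=1$ and $\sigma(h)=-1$, Lemma \ref{lem7}(2) applies to the non-commuting pair $g,h$ and gives $g^2h=(g^2h)^\ast=h^\ast(g^2)^\ast=h^\ast g^2$, where again $(g^2)^\ast=(g^\ast)^2=g^2$.

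Substituting the first relation into the second then yields $g^2h=h^\ast g^2=ghg^{-1}\cdot g^2=ghg$, and cancelling $g$ on the left produces $gh=hg$, contradicting the choice of $h$. Hence $g$ commutes with every element outside $N$ as well, so $g\in\zeta(G)$; as $g\in N^+$ was arbitrary, $N^+\subseteq\zeta(G)$. I expect the only real subtlety here to be bookkeeping: making sure the orientation values $\sigma(g)=1$ and $\sigma(h)=-1$ select the correct branch of each of Lemmas \ref{lem6} and \ref{lem7}, and using $g^\ast=g$ consistently to simplify $(gh)^\ast$ and $(g^2)^\ast$. Everything else is a single cancellation, and no separate treatment of the case $h\in N$ is needed, precisely because the normality of $\mathbb{F}N$ already delivers $N^+\subseteq\zeta(N)$.
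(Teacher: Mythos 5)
Your proof is correct, and it takes a genuinely shorter route than the paper's at the decisive step. Both arguments share the same skeleton: reduce the case $h\in N$ to Lemma \ref{lem5} applied to the normal subalgebra $\mathbb{F}N$ (this is legitimate since $\sigma(g^\ast)=\sigma(g)$, so $\ast$ restricts to an involution of $N$), and then use Lemma \ref{lem6}(2) on the non-commuting pair to extract the conjugation relation ($h^\ast=ghg^{-1}$ in your notation, $g^\ast=ngn^{-1}$ in the paper's). But from that point the paper performs a second explicit normality computation: it expands $(ng+g)\bigl((ng)^\circledast+g^\circledast\bigr)=\bigl((ng)^\circledast+g^\circledast\bigr)(ng+g)$, simplifies the eight resulting terms using the conjugation relation, invokes Lemma \ref{lem7}(1) applied to the pair $(ng,\,n^{-1})$ to identify $(ng)^2n^{-1}$ with $gng$, and finally cancels a factor of $2$ using $\car(\mathbb{F})\neq 2$. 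You instead invoke Lemma \ref{lem7}(2) applied directly to the pair $(g,h)$ with $\sigma(g)=1$, $\sigma(h)=-1$, obtaining $g^2h=(g^2h)^\ast=h^\ast g^2$, and conclude by a one-line substitution and cancellation in the group; no further computation in the group algebra and no division by $2$ is needed at this stage. Interestingly, the paper proves Lemma \ref{lem7}(2) but never uses it in its own proof of Lemma \ref{lem8} (it uses only part (1)); your argument shows that part (2) was tailor-made for exactly this situation, which makes your proof the more economical of the two.
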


\begin{proof}
Let $n\in N^+$ and $g\in G$. We need to prove that $ng=gn$. Since $\mathbb{F}N$ is normal, by Lemma $\ref{lem5}$, it is clear
if $g\in N$. Assume now that $g\notin N$. Then by part $(2)$ of Lemma $\ref{lem6}$, $ng=g^\ast n^\ast=g^\ast n$ and thus
$g^\ast=ngn^{-1}$. Since $\mathbb{F}G$ is normal,
\begin{align*}
      (ng+g)((ng)^\circledast+g^\circledast)                       &=((ng)^\circledast +g^\circledast)(ng+g) \nonumber \\
			(ng+g)(\sigma(g)g^\ast n+\sigma(g)g^\ast)                       &=(\sigma(g)g^\ast n+\sigma(g)g^\ast)(ng+g) \nonumber \\
      (ng+g)(g^\ast n+g^\ast)                       &=(g^\ast n+g^\ast)(ng+g) \nonumber \\
      (ng)(g^\ast n)+(ng)g^\ast+g(g^\ast n)+gg^\ast &=(g^\ast n)(ng)+(g^\ast n)g+g^\ast(ng)+g^\ast g \nonumber \\
      (ng)^2+(ng)^2n^{-1}+gng                       &=(ng)^2+2ng^2. 
\end{align*}
Furthermore by Lemma $\ref{lem7}(1)$, $(ng)^2n^{-1}=n^{-1}(ng)^2=gng$. Hence $2gng=2ng^2$ and as $\car(\mathbb{F})\neq 2$, it
follows that $gn=ng$. 

%
\end{proof}

We shall study separately the two possibilities for $N$.

\begin{lem}\label{lem9}
Let $\mathbb{F}$ be a field and let $G$ be a non-abelian group such that $\mathbb{F}G$ is normal. Let $\sigma:G\rightarrow \{\pm 1\}$
be a non-trivial orientation and $\ast$ an involution on $G$. Suppose that $N=ker(\sigma)$ is abelian. Then $x^\ast=x$, for all $x\in G\setminus N$ and $n^\ast=a^{-1}na=ana^{-1}$, for all $n\in N$ and for all $a\in G\setminus N$.
\end{lem}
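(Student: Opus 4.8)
The plan is to first pin down the action of $\ast$ on the non-trivial coset $G\setminus N$ and then read off its action on $N$ by conjugation. I begin with two structural observations. Since $\circledast$ is an involution, $gg^\ast\in N$ for every $g$, so $\sigma(g^\ast)=\sigma(g)$ and hence $\ast$ preserves both $N$ and $G\setminus N$. Next, because $G$ is non-abelian while $N$ is abelian, I claim that every $x\in G\setminus N$ fails to commute with some element of $N$: if $x$ commuted with all of $N$, then, writing $G=N\cup xN$ and using that $N$ is abelian, $x$ would commute with every element of $G$ and thus be central, forcing $G=\langle N,x\rangle$ to be abelian, a contradiction. The same reasoning shows $\zeta(G)\subseteq N$, so no element of $G\setminus N$ is central.

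To prove $x^\ast=x$ for a fixed $x\in G\setminus N$, I would choose $m\in N$ with $xm\neq mx$, which exists by the previous paragraph. Applying Lemma \ref{lem6}$(2)$ to the non-commuting pair $(x,m)$ (here $\sigma(x)\sigma(m)=-1$) yields the relation
\[
xm=(xm)^\ast=m^\ast x^\ast.
\]
Since $xm\neq mx$, the elements $x$ and $xm$ also fail to commute, while $\sigma(x)\sigma(xm)=1$; so Lemma \ref{lem6}$(1)$ gives $x(xm)=x^\ast(xm)^\ast$, that is,
\[
x^2m=x^\ast m^\ast x^\ast.
\]
The crux is a single substitution: I rewrite the right-hand side as $x^\ast(m^\ast x^\ast)$ and replace $m^\ast x^\ast$ by $xm$ using the first relation, obtaining $x^\ast(xm)=(x^\ast x)m=(xx^\ast)m$, where $x^\ast x=xx^\ast$ holds because $\mathbb{F}G$ is normal. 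Hence $x^2m=xx^\ast m$, and as these are equalities of group elements we get $x^2=xx^\ast$, whence $x^\ast=x$. I expect the only real obstacle to be recognizing that one should feed the relation $xm=m^\ast x^\ast$ back into the identity obtained from the pair $(x,xm)$; once this pairing is spotted the computation is immediate, and since $x$ was arbitrary in $G\setminus N$ this settles the first assertion.

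Finally I would deduce the conjugation formula for $n\in N$ and $a\in G\setminus N$, splitting on whether $n$ is central. If $n$ is not central, then, since $N$ is abelian, $n$ commutes with no element of $G\setminus N$; in particular $na\neq an$, so Lemma \ref{lem6}$(2)$ gives $na=(na)^\ast=a^\ast n^\ast=an^\ast$, using $a^\ast=a$ from the previous step, and therefore $n^\ast=a^{-1}na$. Applying Lemma \ref{lem7}$(1)$ to $(a,n)$ shows that $a^2$ commutes with $n$, whence $a^{-1}na=ana^{-1}$, giving the full equality. If instead $n$ is central, then $a^{-1}na=ana^{-1}=n$, and it remains only to check $n^\ast=n$: the element $an$ lies in $G\setminus N$, so by the first part $(an)^\ast=an$; but $(an)^\ast=n^\ast a^\ast=n^\ast a$, and comparing with $an=na$ forces $n^\ast=n$. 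This exhausts both cases and completes the proof.
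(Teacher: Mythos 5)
Your proof is correct and follows essentially the same route as the paper: both arguments use Lemma \ref{lem6} on suitably chosen non-commuting pairs to force $x^\ast=x$ for every $x\in G\setminus N$, and then read off the action of $\ast$ on $N$ by conjugation. The only differences are organizational: in the first part you avoid the paper's case split on whether $nx\in G^+$ (your unified substitution argument is a streamlined version of the paper's second case), while in the second part your case split on the centrality of $n$ and the appeal to Lemma \ref{lem7}(1) are unnecessary detours — since $na$ and $an$ both lie in $G\setminus N$, the first part immediately gives $na=(na)^\ast=an^\ast$ and $an=(an)^\ast=n^\ast a$, which yields $n^\ast=a^{-1}na=ana^{-1}$ in one stroke.
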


\begin{proof}
Let $x$ be an arbitrary element in $G\setminus N$. Since the index of $N$ in $G$ is equal to $2$, it follows that $G=N\cup Nx$. Thus,
as $N$ is abelian, clearly, if $x$ is central, then $G$ is abelian, a contradiction. Therefore, there exists $n\in N$ such that
$nx\neq xn$. We have two possibilities:
\begin{enumerate}
 \item[$1.$] Assume that $nx\in G^+$. By Lemma $\ref{lem6}(1)$, we have that $(nx)x=(nx)x^\ast$ and thus $x^\ast=x$.
 \item[$2.$] Assume now that $nx\notin G^+$. Again by Lemma $\ref{lem6}(1)$, $(nx)x=(nx)^\ast x^\ast=x^\ast(n^\ast x^\ast)$. Then by
      Lemma $\ref{lem6}(2)$, applied to the non-commuting elements $x$ and $n$, yields that $xn=(xn)^\ast=n^\ast x^\ast$.
      Hence $(nx)x=x^\ast(n^\ast x^\ast)=x^\ast(xn)=xx^\ast n=nxx^\ast$, because $xx^\ast=x^\ast x\in \zeta(G)$. Again, we obtain that $x^\ast=x$.
\end{enumerate}
Let $n\in N$ and $a\notin N$. Then, $na, an\notin N$ and by the previous argument, we have that $na=(na)^\ast=a^\ast n^\ast=an^\ast$ and
$an=(an)^\ast=n^\ast a$, i.e.,
$$
n^\ast=a^{-1}na=ana^{-1},
$$
and the result follows.
\end{proof}

\begin{lem}\label{lem10}
Let $\mathbb{F}$ be a field and let $G$ be a non-abelian group such that $\mathbb{F}G$ is normal. Let $\sigma:G\rightarrow \{\pm 1\}$
be a non-trivial orientation and $\ast$ an involution on $G$. Suppose that $N=ker(\sigma)$ is an SLC-group. Then $G$ has a unique non-identity commutator $s$, which is central in $G$.
\end{lem}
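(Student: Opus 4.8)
The plan is to determine $\zeta(G)$ and $G'$ first, and only then to show that the single non-identity commutator of $G$ is the element $s$ that already exists in the SLC-group $N$.

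First I would extract the structural consequences of $N$ being SLC. Under the canonical involution the symmetric elements of $N$ are exactly its central elements, so $N^+=N\cap G^+=\zeta(N)$, and Lemma~\ref{lem8} then gives $\zeta(N)\subseteq\zeta(G)$; in particular $s\in\zeta(N)\subseteq\zeta(G)$, so $s$ is central in $G$ and $s^2=1$. Next I would prove the oriented analogue of Lemma~\ref{lem4}, namely that $g^2\in\zeta(G)$ for all $g\in G$. For $g\in N$ this follows from the LC-property of $N$ together with $\zeta(N)\subseteq\zeta(G)$. For $g\notin N$ one has $g^2\in N$, and for each $h\in N$ either $gh=hg$, whence $g^2$ commutes with $h$, or $gh\neq hg$, in which case Lemma~\ref{lem7}(1) applies (since $\sigma(h)=1$) and gives $g^2h=hg^2$; thus $g^2$ centralizes $N$ and so $g^2\in\zeta(N)\subseteq\zeta(G)$. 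Hence $G/\zeta(G)$ has exponent $2$, is therefore abelian, and $G'\subseteq\zeta(G)$; in particular the commutator map on $G$ is bi-multiplicative.

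Using bi-multiplicativity I would reduce the problem to a single type of commutator. Fix $a\in G\setminus N$, so $G=N\cup Na$. Commutators of two elements of $N$ already lie in $\{1,s\}$ because $N$ is SLC, and since $(a,a)=1$ every commutator of two elements of $Na$ expands into a product of commutators of the shapes $(m,n)$ and $(n,a)$ with $m,n\in N$. So it suffices to show that the cross commutators $(n,a)$, with $n\in N$ and $a\notin N$, lie in $\{1,s\}$; the subgroup $G'$ is then forced to equal $\{1,s\}$.

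The main obstacle is exactly this last step, and the key idea is to play Lemma~\ref{lem6}(2) against Lemma~\ref{lem6}(1) by introducing the auxiliary element $na$. Assume $n\notin\zeta(N)$ and $na\neq an$ (otherwise $(n,a)=1$); then $n^\ast=sn$. Applying Lemma~\ref{lem6}(2) to the non-commuting pair $n,a$, where $\sigma(n)\sigma(a)=-1$, gives $na=(na)^\ast=a^\ast n^\ast=sa^\ast n$, i.e. $nan^{-1}=sa^\ast$. Now observe that $na\in G\setminus N$ and that $a,na$ fail to commute (they commute precisely when $a,n$ do), while $\sigma(a)\sigma(na)=1$; applying Lemma~\ref{lem6}(1) to the pair $a,na$ yields $a(na)=a^\ast(na)^\ast=s(a^\ast)^2n$. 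Substituting $a^\ast=s\,nan^{-1}$ gives $(a^\ast)^2=na^2n^{-1}=a^2$, since $a^2$ is central, so $ana=sa^2n$ and hence $nan^{-1}=sa$. Comparing with $nan^{-1}=sa^\ast$ forces $a^\ast=a$, and therefore $(n,a)=nan^{-1}a^{-1}=s$ (recall $G'$ is central, so $nan^{-1}a^{-1}$ is the commutator). This proves that every cross commutator, and hence every commutator of $G$, lies in $\{1,s\}$, so $G$ has the unique non-identity commutator $s$, which we already showed is central.
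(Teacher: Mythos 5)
Your proof is correct, and there is no circularity: Lemmas \ref{lem6}, \ref{lem7} and \ref{lem8} all precede this statement and do not depend on it. The skeleton matches the paper's (first make $s$ central, then force every commutator of $G$ into $\{1,s\}$ by reducing to commutators between $N$ and $G\setminus N$ and computing those with Lemma \ref{lem6}), but you execute it differently at three points, each soundly. First, the paper proves $s\in\zeta(G)$ by observing that $N'=\{1,s\}$ is characteristic in $N\unlhd G$, hence normal in $G$ and pointwise fixed under conjugation; you instead identify $N^+=\zeta(N)$ under the canonical involution and invoke Lemma \ref{lem8}, which buys the stronger inclusion $\zeta(N)\subseteq\zeta(G)$ that you then reuse throughout. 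Second, the paper never isolates the statement that all squares of $G$ are central: it glues three cases together with the commutator identity $(ng,m)=(n,m)^g(g,m)$ and invokes Lemma \ref{lem7}(1) only inside its third case; you prove the oriented analogue of Lemma \ref{lem4} up front, deduce $G'\subseteq\zeta(G)$ and bi-multiplicativity of commutators, and the reduction to cross commutators then comes in one stroke --- arguably a cleaner organization. Third, for the cross commutators the paper applies Lemma \ref{lem6}(2) twice, to the pairs $(m,mg)$ and $(m,g)$, obtaining $m^\ast=g^{-1}mg$ and hence $(m,g)=s$; you instead play Lemma \ref{lem6}(2) on $(n,a)$ against Lemma \ref{lem6}(1) on $(a,na)$, obtaining $nan^{-1}=sa$ together with the by-product $a^\ast=a$. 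That by-product is not wasted effort: the paper extracts the same fact from its case 1 and cites it later (in the claim inside Lemma \ref{lem11}), so your variant makes explicit at this stage information the paper leaves implicit.
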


\begin{proof}
Since $N$ is an SLC-group, the restriction $\ast_{|_N}:N \rightarrow N$ is the canonical involution given by the expression \eqref{eq1}.

Let $m, n\in N$ be such that $mn\neq nm$. Then $m^\ast=n^{-1}mn, n^\ast=m^{-1}nm$ and $s=m^{-1}m^\ast$.
We notice that this is valid for all non-central element $m\in N$. As $N'=\{1, s\}$ is characteristic in $N$ and $N\unlhd G$,
we have that $N'\unlhd G$. Hence, $g^{-1}sg=s$ for all $g\in G$ and so, $s$ is central.

To prove that $G'=\{1,s\}$, we shall consider the following cases.

\begin{enumerate}
 \item[$1.$] Let $g\notin N$ and assume that $mg\neq gm$ for some $m\in N$. Lemma $\ref{lem6}(2)$, applied to the non-commuting
      elements $m$ and $mg$, yields that $m^2g=m(mg)=[m(mg)]^\ast=(mg)^\ast m^\ast$. As $mg\neq gm$, then again by Lemma $\ref{lem6}(2)$, we have that $mg=(mg)^\ast$ and thus
      \begin{equation}\label{eq2}
       m^2g=m(gm^\ast),
       \end{equation}
       that is, $m^*=g^{-1}mg$.
 
Then, as $s=m^{-1}m^\ast$, we get that $s=(m,g)$. Moreover, we can prove that $$(ng,m)=(n,m)^g(g,m)=1 \text{ or } s,$$
for any $n\in N$.

 \item[$2.$] Let $g\notin N$. Suppose that $gm=gm$ for all $m\in N$. Then, for all $n\in N$, we have that
       \begin{align*}
        (ng,m) &=(n,m)^g(g,m) \nonumber \\
               &=(n,m)=1 \text{ or } s.
       \end{align*}

 \item[$3.$] Finally, assume that $g, h$ are any two elements of $G\setminus N$. Then, since $G=N\cup Ng$, $h=ng$, for some $n\in
      N$. Thus
      $$
      (h,g)=(ng,g)=(n,g)^g(g,g)=g^{-1}n^{-1}g^{-1}ng^2.
      $$
      Because of Lemma $\ref{lem7}(1)$, $ng^2=g^2n$, and thus we obtain $(h, g)=(g, n)$. Hence, by the first case $(h, g)=1$ or $s$. 
\end{enumerate}
Thus, in any case, $G'=\{1, s\}=N'\subseteq \zeta(G)$.
\end{proof}

\begin{lem}\label{lem11}
In the same conditions of the above lemma, there exits $g_0\in G\setminus N$ such that $g_0\in C_G(N)$ and $g_0^\ast=sg_0$.
Consequently, $g_0$ is central in $G$.
\end{lem}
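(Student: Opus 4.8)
The plan is to build the required element $g_0$ in two stages: first locate \emph{some} $g_1\in C_G(N)\setminus N$, then show that for any such $g_1$ the relation $g_1^\ast=sg_1$ holds automatically, after which centrality of $g_1$ in $G$ is immediate. The crucial preliminary observation, which drives everything, is that $\zeta(N)\subseteq\zeta(G)$. Indeed, the set $N^+$ of $\circledast$-symmetric group elements coincides with the set of $\ast$-symmetric elements of $N$, and since $\ast$ restricts on $N$ to the canonical involution \eqref{eq1}, these are exactly the central elements of $N$; thus $N^+=\zeta(N)$, and Lemma~\ref{lem8} gives $\zeta(N)\subseteq\zeta(G)$. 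Consequently every $g\in G$ centralizes $\zeta(N)$, and since $G'=\{1,s\}$ with $s$ central (Lemma~\ref{lem10}), we have $n^g=n(n,g)$ with $(n,g)\in\{1,s\}\subseteq\zeta(N)$, so conjugation by any $g\in G$ acts trivially on $N/\zeta(N)\cong C_2\times C_2$, moving each $n\in N$ either to $n$ or to $sn$.

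To produce $g_1$, I would fix $a\in G\setminus N$ together with $u,v\in N$ whose images generate $N/\zeta(N)$ and satisfy $(u,v)=s$. Because $(u,a),(v,a)\in\{1,s\}$, conjugation by $a$ sends $u\mapsto u$ or $su$ and $v\mapsto v$ or $sv$; a short case check (using $(u,v)=(v,u)=s$) shows that in each of the four cases there is $n_0\in\{1,u,v,uv\}$ whose conjugation has the \emph{same} effect on $u$ and on $v$ as $a$ does. Since conjugation by $a$ and by $n_0$ both fix $\zeta(N)$ pointwise and $N=\langle u,v,\zeta(N)\rangle$, the two automorphisms of $N$ agree everywhere, so $g_1:=n_0^{-1}a$ centralizes $N$; and $g_1\notin N$ because $a\notin N$ and $n_0\in N$.

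Next I would pin down $g_1^\ast$. As $\circledast$ is an involution, $\sigma(g_1^\ast)=\sigma(g_1)=-1$, hence $t:=g_1^{-1}g_1^\ast\in N$ and $g_1^\ast=g_1t$. Now apply Lemma~\ref{lem6}(2) to $x=g_1u\notin N$ and $y=v\in N$: here $\sigma(x)\sigma(y)=-1$, and $xy\neq yx$ since $uv\neq vu$, so the lemma forces $xy=(xy)^\ast$, that is,
$$g_1(uv)=(uv)^\ast g_1^\ast=s(uv)\,g_1t,$$
where $(uv)^\ast=s(uv)$ because $uv$ is non-central in $N$. Using $g_1\in C_G(N)$ and the centrality of $s$ to rewrite the right-hand side as $g_1\,s(uv)\,t$ and cancelling $g_1(uv)$ yields $st=1$, i.e. $t=s$ and $g_1^\ast=sg_1$. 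Finally, since $[G:N]=2$ and $g_1\in C_G(N)\setminus N$, we get $G=\langle N,g_1\rangle$; as $g_1$ commutes with $N$ and with itself, $g_1\in\zeta(G)$. Thus $g_0:=g_1$ does the job.

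I expect the existence step---producing $g_1\in C_G(N)\setminus N$---to be the main obstacle, and it is precisely where the structural input $\zeta(N)\subseteq\zeta(G)$ is indispensable: it is what forces the conjugation ``twist'' determined by an arbitrary $a\in G\setminus N$ to be realized by an inner automorphism of $N$ (by an element of $\{1,u,v,uv\}$), thereby letting us correct $a$ into a centralizing element. Once $g_1$ is in hand, both the identity $g_1^\ast=sg_1$ and the centrality of $g_1$ are short consequences of Lemma~\ref{lem6} and the index-$2$ hypothesis.
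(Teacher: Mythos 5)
Your proof is correct, and at its core it follows the same construction as the paper's: you fix generators $u,v$ of $N$ modulo $\zeta(N)$ with $(u,v)=s$, correct an arbitrary $a\in G\setminus N$ by an element of $\{1,u,v,uv\}$ to land in $C_G(N)\setminus N$ (the paper's corrections $g$, $gy$, $g(xy)$ in its three cases are the same elements as your $n_0^{-1}a$ up to central square factors), then extract $g_0^\ast=sg_0$, then deduce centrality from $[G:N]=2$. Where you genuinely diverge is in the scaffolding. The paper supports both main steps with its ``Claim'' that every element of $(G\setminus N)\setminus C_G(N)$ is $\ast$-symmetric, justified by citing the interior of the proof of Lemma~\ref{lem10}, case $(1)$; this Claim is what lets the paper control $\zeta(N)$ in its case analysis, and, applied to $g_0n$ with $n$ non-central in $N$, is what yields $g_0^\ast=sg_0$. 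You avoid that Claim entirely: for the existence step you observe that the canonical involution on the SLC-group $N$ gives $N^+=\zeta(N)$, so Lemma~\ref{lem8} yields $\zeta(N)\subseteq\zeta(G)$, after which conjugation by any $g\in G$ is an automorphism of $N$ fixing $\zeta(N)$ pointwise and sending each $n$ to $n$ or $sn$ (since $G'=\{1,s\}$ by Lemma~\ref{lem10}), making the matching against $\{1,u,v,uv\}$ immediate; and for the relation $g_1^\ast=sg_1$ you rerun Lemma~\ref{lem6}(2) on the pair $(g_1u,v)$, using $\sigma(g^\ast)=\sigma(g)$ (which follows from $gg^\ast\in N$) to write $g_1^\ast=g_1t$ with $t\in N$ and then cancel to get $t=s$. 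The paper's version buys brevity by reusing computations already done inside Lemma~\ref{lem10}; yours buys a self-contained argument whose inputs are stated lemmas (Lemma~\ref{lem6} and Lemma~\ref{lem8}) rather than byproducts of another proof, at the modest cost of one extra application of Lemma~\ref{lem6}(2).
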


\begin{proof}
Since $N$ is an SLC-group and as we mentioned earlier, \cite[Proposition III.3.6, p. 98]{GJP:96}, we have that $N/\zeta(N)\cong
C_2\times C_2$, i.e., $N=\langle x, y, \zeta(N)\rangle$, where $s=(x, y)$ is the unique non-identity commutator.

First we prove that there exits $g_0\in (G\setminus N)\cap C_G(N)$. Let $g\in G\setminus N$.

\vspace{0.3cm}

\emph{\textbf{Claim:}} If $g\notin C_G(N)$, then $gz=zg$ for all $z\in \zeta(N)$. 

In fact, by the proof of the case (1) of Lemma \ref{lem10}, we know that $y^*=y$ for all $y\in (G\setminus N)\setminus C_G(N)$. So $g^\ast=g$ and for any $z\in \zeta(N)$ we obtain that $$gz=(gz)^\ast=z^\ast g^\ast=zg.$$


%

The proof continues and we have three possible cases.
\begin{enumerate}
 \item[$1.$] Suppose that $(x,g)=(y,g)=1$. We shall prove that $g\in C_G(N)$. Assume that $g\not \in C_G(N)$, by the previous claim we have that $g$ commutes with any element of $\zeta(N)$. As $N=\langle x, y, \zeta(N)\rangle$, it implies that $g\in C_G(N)$ a contradiction. So in this case it is enough to take $g_0=g$.

 \item[$2.$] Suppose now that $(x,g)=s$ and $(y,g)=1$.
      We know that $(x, y)=s$. Let $g_0=gy$. It is clear that $(g_0, y)=1$. Furthermore,
      $$
      g_0x=(gy)x=sg(xy)=s^2x(gy)=xg_0,
      $$
      and thus $(g_0,x)=1$.

 \item[$3.$] In case $(x, g)=(y, g)=s$, then set $y'=xy$. Therefore,
      $$
      gy'=g(xy)=sx(gy)=s^2(xy)g=y'g.
      $$
      Hence $(y', g)=1$ and $(x, g)=s$. By the second case, it is enough to take $g_0=gy'$.
      $g_0\in C_G(N)$.

\end{enumerate}
Thus, in all cases, there exists $g_0\in (G\setminus N)\cap C_G(N)$.

Finally, let $m, n\in N$ such that $mn\neq nm$. As $g_0\in C_G(N)$, then $(g_0n)m\neq m(g_0n)$. Also, as $g_0n\notin C_G(N)$, by the proof of the claim we get that, $(g_0n)^\ast=g_0n=ng_0$, so
$$
ng_0=(g_0n)^\ast=n^\ast g_0^\ast=sng_0^\ast.
$$
Hence $g_0^\ast=sg_0$ and thus $g_0$ is as given in the statement.

Since $G=N\cup Ng_0$ and $g_0\in C_G(N)$, the last statement is now clear.

\end{proof}

\begin{prop}\label{prop1}
Let $\mathbb{F}$ be a field and let $G$ be a non-abelian group such that $\mathbb{F}G$ is normal. Let $\sigma:G\rightarrow \{\pm 1\}$
be a non-trivial orientation and $\ast$ an involution on $G$. Suppose that $N=ker(\sigma)$ is an SLC-group. Then $G$ is an LC-group and $\ast$ is given by
\begin{equation*}
g^{\ast}=
\begin{cases}
 g, & \text{ if $g\in N\cap \zeta(G)$ or $g\in (G\setminus N)\setminus \zeta(G)$;}\\
 sg, & \text{otherwise.}
\end{cases}
\end{equation*}

\end{prop}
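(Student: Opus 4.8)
The plan is to exploit the central element $g_0\in(G\setminus N)\cap C_G(N)$ produced by Lemma \ref{lem11}, together with the fact from Lemma \ref{lem10} that $G'=\{1,s\}$ with $s\in\zeta(G)$, in order to reduce everything about $G$ to the already-understood structure of the SLC-group $N$. The backbone of the argument is two identifications of central subgroups. First I would establish that $\zeta(N)=N\cap\zeta(G)$: the inclusion $N\cap\zeta(G)\subseteq\zeta(N)$ is immediate, and conversely if $z\in\zeta(N)$ then $z$ commutes with all of $N$ and, because $g_0\in C_G(N)$ and $G=N\cup Ng_0$, it also commutes with every element $mg_0$, so $z\in\zeta(G)$. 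Second, for $g\in G\setminus N$ I would prove the equivalence $g\in\zeta(G)\iff g\in C_G(N)$: writing $g=ng_0$ with $n\in N$ and using that $g_0$ is central and centralizes $N$, one gets $g\in C_G(N)\iff n\in C_N(N)=\zeta(N)=N\cap\zeta(G)\iff g\in\zeta(G)$.

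With these in hand, the LC-property is quick. Since $g_0\in\zeta(G)\setminus N$ and $[G:N]=2$, the center is not contained in $N$, whence $N\zeta(G)=G$; by the second isomorphism theorem and the previous identification,
$$G/\zeta(G)=N\zeta(G)/\zeta(G)\cong N/(N\cap\zeta(G))=N/\zeta(N)\cong C_2\times C_2,$$
the last isomorphism because $N$ is an SLC-group. As $G$ is non-abelian, the characterization of LC-groups with a unique non-identity commutator in \cite[Proposition III.3.6, p. 98]{GJP:96} then shows that $G$ is an LC-group (with unique non-identity commutator $s$, already exhibited in Lemma \ref{lem10}).

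It remains to pin down $\ast$, which I would do through the four cases of the statement, using the two identifications above to turn every ``$C_G(N)$'' into ``$\zeta(G)$''. For $g\in N$ the involution is the canonical one on the SLC-group $N$, so $g^\ast=g$ when $g\in\zeta(N)=N\cap\zeta(G)$ and $g^\ast=sg$ when $g\in N\setminus\zeta(N)=N\setminus\zeta(G)$; this settles both $N$-cases. For $g\in(G\setminus N)\setminus\zeta(G)=(G\setminus N)\setminus C_G(N)$ we have $g^\ast=g$ by the computation in the proof of Lemma \ref{lem10}(1) (recorded again in the Claim inside Lemma \ref{lem11}). Finally, if $g\in(G\setminus N)\cap\zeta(G)$, write $g=ng_0$; then $n=gg_0^{-1}\in N\cap\zeta(G)=\zeta(N)$, so $n^\ast=n$, and since $g_0^\ast=sg_0$ with $s,g_0$ central,
$$g^\ast=(ng_0)^\ast=g_0^\ast n^\ast=sg_0n=sg,$$
which is the remaining case.

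The step I expect to be the main obstacle, beyond the bookkeeping, is obtaining \emph{exactly} $G/\zeta(G)\cong C_2\times C_2$ rather than merely an elementary abelian $2$-group: the induced commutator pairing on $G/\zeta(G)$ with values in $G'\cong C_2$ is nondegenerate and alternating, so it only forces the rank to be even. This is precisely where the central element $g_0\notin N$ is decisive, as it collapses the index-$2$ overgroup $G$ onto $N$ modulo the center and thereby transfers $N/\zeta(N)\cong C_2\times C_2$ up to $G$. The very same element also converts the $C_G(N)$-dichotomy of Lemmas \ref{lem10}--\ref{lem11} into the $\zeta(G)$-dichotomy appearing in the displayed formula for $\ast$, which is what makes the four cases line up with the statement.
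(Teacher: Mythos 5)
Your proof is correct, but it reaches the LC-property by a genuinely different route than the paper. The paper verifies the LC-condition directly: it takes an arbitrary commuting pair $g,h\in G$, splits into the three cases ($g,h\in N$; $g,h\notin N$; mixed), writes elements outside $N$ as $ng_0$, and pushes everything down to the LC-property of $N$ to exhibit which of $g$, $h$, $gh$ is central. You instead prove the two identifications $\zeta(N)=N\cap\zeta(G)$ and, for $g\notin N$, $g\in\zeta(G)\iff g\in C_G(N)$, and then get the LC-property structurally: since $g_0\in\zeta(G)\setminus N$ forces $N\zeta(G)=G$, the second isomorphism theorem gives
$$G/\zeta(G)\cong N/(N\cap\zeta(G))=N/\zeta(N)\cong C_2\times C_2,$$
and the characterization in \cite[Proposition III.3.6]{GJP:96} (quoted in Section 2 of the paper but not invoked at this point in its proof) yields that $G$ is LC with a unique non-identity commutator. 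Your route is shorter and makes explicit the central-subgroup identifications that the paper uses only implicitly (e.g.\ when it concludes from $m\in\zeta(N)$ that $g_0m\in\zeta(G)$); the paper's route is more self-contained, as it never needs the quotient characterization in the converse direction. For the formula for $\ast$, the two arguments essentially coincide: the paper computes $(ng_0)^\ast=n^\ast g_0^\ast$ in each of the four cases, while you handle the case $g\in(G\setminus N)\setminus\zeta(G)$ by citing the fact $y^\ast=y$ for $y\in(G\setminus N)\setminus C_G(N)$ recorded in the Claim of Lemma \ref{lem11}, which is legitimate given your identification of $(G\setminus N)\setminus\zeta(G)$ with $(G\setminus N)\setminus C_G(N)$; the same direct computation $(ng_0)^\ast=sn\cdot sg_0=g$ would also have worked there.
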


\begin{proof}
By Lemma $\ref{lem10}$, $N'=\{1, s\}=G'\subset\zeta(G)$, i.e., $G$ has a unique non-trivial commutator $s$, which is a central element
of order $2$. Moreover, by Lemma $\ref{lem11}$, there exists $g_0\in (G\setminus N)\cap\zeta(G)$ such that $g_0^\ast=sg_0$. Consequently, $G=N\cup Ng_0=N\cup g_0N$.

Let $g, h\in G$ be such that $gh=hg$. Since $N$ is an LC-group the statement is clear if $g, h\in N$.

We have two possible cases.

\begin{enumerate}
 \item[$1.$] If $g, h\notin N$, then $g=g_0m$ and $h=g_0n$ for some $m, n\in N$, so
      $$
      (g_0m)(g_0n)=(g_0n)(g_0m).
      $$
      Now, as $g_0$ central in $G$, it follows that $mn=nm$. Since $N$ has the LC-property, then at least one element of $\{m, n, mn\}$ is central in $N$. Hence, either $g_0m, g_0n$ or $(g_0m)(g_0n)\in \zeta(G)$, as desired.

 \item[$2.$] If $g\in N$ and $h\notin N$, then $h=g_0m$ for some $m\in N$ and thus $g(g_0m)=(g_0m)g$. It follows that $gm=mg$. Again
      since $N$ is an LC-group, we have that either $g\in \zeta(N)$ or $m\in\zeta(N)$ or $gm\in\zeta(N)$. Consequently, at least one of the elements of the set $\{g, g_0m, g(g_0m)\}$ is central in $G$.


\end{enumerate}
Thus, in any case, $G$ is an LC-group with a unique non-identity commutator $s$.

Now as $N$ is an SLC-group we know that $\ast$ is a canonical involution on $N$. To prove the last statement we consider the following cases.
\begin{enumerate}[$i.$]
 \item If $g\in N\cap \zeta(G)$, we have that $g\in \zeta(N)$ and then 
$g^{\ast}=g$. 
\item If $g\in (G\setminus N)\setminus \zeta(G)$, then $g=ng_0$ for some $n\in N$. As $g_0$ is central in $G$, we have that $n\not\in \zeta(N)$.
Thus $g^{\ast}=(ng_0)^{\ast}=n^{\ast}g_0^{\ast}=snsg_0=ng_0=g$.
\item If $g\in (G\setminus N)\cap \zeta(G)$, then $g=ng_0$ for some $n\in N$. As $g\in \zeta(G)$, then $n\in \zeta(N)$. Thus
$$g^{\ast}=n^{\ast}g_0^{\ast}=nsg_0=sg.$$
\item Assume that $g\in N\setminus \zeta(G)$. It is easy to see that $g\not\in \zeta(N)$ and therefore $g^{\ast}=sg$.
\end{enumerate}

\end{proof}

We are now able to obtain necessary and sufficient conditions on $G$ and $N$ under which the group algebra $\mathbb{F}G$ is normal with respect
to the involution $\circledast$ given by expression \eqref{eq0}.

%

\begin{teor}
Let $\mathbb{F}$ be a field with $\car(\mathbb{F})\neq 2$ and let $G$ be a non-abelian group with involution $\ast$ and a non-identity orientation $\sigma$. Then, $\mathbb{F}G$ is normal with respect to the oriented group involution $\circledast$ if and only if one
of the following conditions holds:
\begin{enumerate}
 \item[$1.$] $N=ker(\sigma)$ is abelian, $\left(G\setminus N\right)\subset G^+$ and $n^\ast=a^{-1}na=ana^{-1}$, for all $n\in N$ and for all
      $a\in G\setminus N$;

 \item[$2.$] $N=ker(\sigma)$ and $G$ are LC-groups and there exists a unique non-identity commutator $s$ and $g_0\in (G\setminus N)\cap \zeta(G)$ such that 
      $g_0^\ast=sg_0$ and the involution $\ast$ is given by
      \begin{equation*}
g^{\ast}=
\begin{cases}
 g, & \text{ if $g\in N\cap \zeta(G)$ or $g\in (G\setminus N)\setminus \zeta(G)$;}\\
 sg, & \text{otherwise.}
\end{cases}
\end{equation*}

\end{enumerate}
\end{teor}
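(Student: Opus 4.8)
The plan is to prove the two directions separately. The forward (``only if'') implication is essentially a bookkeeping assembly of the lemmas already established. Assuming $\mathbb{F}G$ is normal, $\circledast$ restricts to $\ast$ on $\mathbb{F}N$, so $\mathbb{F}N$ is itself a normal group algebra; applying Theorem \ref{teor3} to $N$ forces $N$ to be either abelian or an SLC-group. If $N$ is abelian, Lemma \ref{lem9} yields precisely condition $(1)$: every $x\in G\setminus N$ is symmetric and $n^\ast=a^{-1}na=ana^{-1}$. If $N$ is an SLC-group, then Lemmas \ref{lem10} and \ref{lem11} together with Proposition \ref{prop1} yield precisely condition $(2)$: $G$ is an LC-group with a unique non-identity commutator $s$, there is a central $g_0\in(G\setminus N)\cap\zeta(G)$ with $g_0^\ast=sg_0$, and $\ast$ has the stated piecewise form. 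So the real content lies in the converse.

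For the converse under condition $(1)$, fix $a\in G\setminus N$ and write $\alpha=\beta+\gamma$ with $\beta\in\mathbb{F}N$ and $\gamma\in(\mathbb{F}N)a=\mathbb{F}(G\setminus N)$. Since every $g\in G\setminus N$ is symmetric with $\sigma(g)=-1$, one gets $\gamma^\circledast=-\gamma$, while $\beta^\circledast=\beta^\ast$, so $\alpha^\circledast=\beta^\ast-\gamma$. Expanding $\alpha\alpha^\circledast$ and $\alpha^\circledast\alpha$, the terms $\gamma^2$ cancel and, because $\mathbb{F}N$ is commutative, $\beta\beta^\ast=\beta^\ast\beta$; the desired identity reduces to the single equality $\gamma(\beta+\beta^\ast)=(\beta+\beta^\ast)\gamma$. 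Writing $\gamma=\delta a$ and using the defining relation $n^\ast=a^{-1}na$ in the form $\beta a=a\beta^\ast$ (equivalently $a\beta=\beta^\ast a$), a short computation that moves $a$ across $\beta+\beta^\ast$ and again invokes commutativity of $\mathbb{F}N$ gives exactly this equality.

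For the converse under condition $(2)$, I would use that $g_0$ is central with $G=N\cup Ng_0$ and write $\alpha=\mu+\nu g_0$ with $\mu,\nu\in\mathbb{F}N$. From the piecewise formula and $g_0^\ast=sg_0$ one computes $\mu^\circledast=\mu^\ast$ and $(\nu g_0)^\circledast=-s\nu^\ast g_0$, so $\alpha^\circledast=\mu^\ast-s\nu^\ast g_0$. Expanding both products and using that $g_0$ is central with $g_0^2=:c\in\zeta(N)$, that $s$ is central of order $2$, and that $\mathbb{F}N$ is normal (so $\mu\mu^\ast=\mu^\ast\mu$ and $\nu\nu^\ast=\nu^\ast\nu$), the symmetric terms and the $c$-terms match, and $\alpha\alpha^\circledast=\alpha^\circledast\alpha$ reduces to the bilinear identity $[\nu,\mu^\ast]=s[\mu,\nu^\ast]$ for all $\mu,\nu\in\mathbb{F}N$. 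By bilinearity it suffices to check this on pairs of group elements $m,n\in N$, split according to whether $m,n$ are central in $N$; the only nontrivial case is $m,n$ both non-central, where $m^\ast=sm$ and $n^\ast=sn$ and the identity collapses to $(1+s)(nm-mn)=0$, which holds since $s^2=1$ and non-commuting elements satisfy $mn=s\,nm$.

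I expect the main obstacle to be this converse, and specifically the case $(2)$ computation: one must correctly handle the twist by $\sigma$ together with the two-valued involution formula (the factor $s$ is a \emph{group element}, not a scalar), and then distil the full product identity down to the clean bilinear relation $[\nu,\mu^\ast]=s[\mu,\nu^\ast]$, whose verification is what ultimately exploits $s^2=1$ and the unique–commutator property of the LC-structure.
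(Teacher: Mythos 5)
Your proof is correct, and for the forward implication and for case $(1)$ of the converse it follows essentially the same path as the paper: the necessity is the same assembly of Theorem \ref{teor3} applied to $N$, Lemma \ref{lem9}, Lemmas \ref{lem10}--\ref{lem11} and Proposition \ref{prop1}, and your case $(1)$ computation is the paper's (write $\alpha=\beta+\gamma a$, use $\beta a=a\beta^\ast$, $\beta^\ast a=a\beta$ and commutativity of $\mathbb{F}N$). The genuine divergence is in case $(2)$. The paper splits each $\mathbb{F}N$-component into its $\zeta(N)$-supported and non-central parts, $\alpha=(\beta_1+\gamma_1g_0)+(\beta_2+\gamma_2g_0)$, notes that $\beta_1+\gamma_1g_0$ and its $\circledast$-image are central in $\mathbb{F}G$, and reduces $\alpha\alpha^\circledast-\alpha^\circledast\alpha$ to $g_0(1+s)[\gamma_2,\beta_2]$, which vanishes by $(1+s)(1-s)=0$. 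You instead keep $\alpha=\mu+\nu g_0$ whole, use normality of $\mathbb{F}N$ to cancel the diagonal terms $\mu\mu^\ast$ and $\nu\nu^\ast$, and reduce to the bilinear identity $[\nu,\mu^\ast]=s[\mu,\nu^\ast]$, checked on group elements; the final cancellation is again $(1+s)(1-s)=0$. Your organization is a bit cleaner because it recycles Theorem \ref{teor3} rather than redoing the central/non-central bookkeeping inside the product expansion, but one point should be made explicit: invoking ``$\mathbb{F}N$ is normal'' (and reading the piecewise formula as $m^\ast=sm$ exactly for $m\notin\zeta(N)$) requires the identification $N\cap\zeta(G)=\zeta(N)$, since the formula in condition $(2)$ is stated in terms of $\zeta(G)$. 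This is a one-line remark: $G=N\cup Ng_0$ with $g_0\in\zeta(G)$, so an element of $N$ that is central in $N$ is central in $G$; with it, $N$ is an SLC-group with its canonical involution and Lemma \ref{lem1} (equivalently Theorem \ref{teor3}) applies, so your argument goes through.
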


\begin{proof}
From lemmas $\ref{lem9}$, $\ref{lem10}$ and $\ref{lem11}$ and Proposition $\ref{prop1}$ the conditions are necessary. Now we prove
that they are also sufficient.
\begin{enumerate}
 \item[$1.$] Suppose $N=ker(\sigma)$ is abelian, $x^\ast=x$ for $x\in G\setminus N$ and $n^\ast=a^{-1}na=ana^{-1}$, for all $n\in N$ and for
      all $a\in G\setminus N$. We first note that $n^\ast$ is well-defined because if $a$ and $b$ are any two elements of $G\setminus N$,
      then $b=ma$ for some $m\in N$, so $b^{-1}nb=a^{-1}m^{-1}nma=a^{-1}na$, because $N$ is abelian.

      Now fix $a\notin N$. To show that $*$ is an involution, there are three cases to consider.

  \begin{enumerate}
   \item[$i.$] If $m, n\in N$, then because $N$ is abelian, we have that
        $$
        (mn)^\ast=a^{-1}mna=a^{-1}nma=(a^{-1}na)(a^{-1}ma)=n^\ast m^\ast.
        $$

   \item[$ii.$] If $m\in N$ and $x\notin N$, then $x=na$ for some $n\in N$. Since $mx\notin N$, from our assumption, $(mx)^\ast=mx$. Moreover,
        since $N$ is abelian, we obtain that
        $$
        x^*m^\ast=xa^{-1}ma=naa^{-1}ma=nma=mna=mx.
        $$
        Hence $(mx)^\ast=x^\ast m^\ast$.

   \item[$iii.$] If $x, y\notin N$, then $x=ma$ and $y=na$ for some $m, n\in N$. Since $xy\in N$, $(xy)^\ast=a^{-1}xya=a^{-1}(ma)(na)a=nama$
        (because $a^{-1}ma, na^2\in N$ and $N$ is abelian), while $y^\ast x^\ast=yx=nama$ also, i.e., $(xy)^\ast=y^\ast x^\ast$.

  \end{enumerate}

      Now, if $\alpha\in \mathbb{F}G$, then $\alpha=\beta +\gamma a$ with $\beta, \gamma\in \mathbb{F}N$ and $a\in G\setminus N$. So 
			
			$\alpha^{\circledast}=\beta^{\circledast}+(\gamma a)^{\circledast}$ , where $\beta^{\circledast}=a^{-1}\beta a$ and $(\gamma a)^{\circledast}=-\gamma a$. Then, as $\mathbb{F}N$ is a commutative group algebra, we obtain
			\begin{align*}
			\alpha^\circledast \alpha&=(a^{-1}\beta a-\gamma a)(\beta+\gamma a)\\
			&=a^{-1}\beta a \beta+ a^{-1}\beta a\gamma a-\gamma a\beta-\gamma a\gamma a\\
			&=a^{-1}\beta a \beta -\gamma a\gamma a,
			\end{align*}
			because $a^{-1}\beta a\gamma a=\gamma a\beta$. Similarly, 
			\begin{align*}
			\alpha\alpha^\circledast &=(\beta+\gamma a)(a^{-1}\beta a-\gamma a)\\
			&=\beta a^{-1}\beta a-\beta \gamma a+\gamma a a^{-1}\beta a-\gamma a\gamma a\\
			&=\beta a^{-1}\beta a -\gamma a\gamma a,
			\end{align*}
			because $\gamma a a^{-1}\beta a=\gamma \beta a=\beta \gamma a$. Therefore, as $a^{-1}\beta a \beta=\beta a^{-1}\beta a$, we conclude that $\mathbb{F}G$ is normal.

\item[$2.$] Let $\alpha=\beta + \gamma g_0\in \mathbb{F}G$ with $\beta,\gamma\in \mathbb{F}N$. Then 
\begin{align*}
\alpha&=\beta+\gamma g_0\\
&=\underbrace{\sum_{w\in \zeta(N)}\beta_w w}_{\beta_1}+\underbrace{\sum_{m\not\in \zeta(N)}\beta_m m}_{\beta_2}+\left(\underbrace{\sum_{z\in \zeta(N)}\gamma_z z}_{\gamma_1}+\underbrace{\sum_{n\not\in \zeta(N)}\gamma_n n}_{\gamma_2}\right)g_0\\
&=(\beta_1+\gamma_1g_0)+(\beta_2+\gamma_2g_0).
\end{align*}
Thus $\alpha^\circledast=(\beta_1-sg_0\gamma_1)+s(\beta_2-sg_0\gamma_2)$ and then 
\begin{align}\label{eq:teor4.1_eq1}
\begin{split}
\alpha\alpha^\circledast&=(\beta_1+\gamma_1g_0)(\beta_1-sg_0\gamma_1)+s(\beta_1+\gamma_1g_0)(\beta_2-sg_0\gamma_2)\\
&\qquad+(\beta_2+\gamma_2g_0)(\beta_1-sg_0\gamma_1)+s(\beta_2+\gamma_2g_0)(\beta_2-sg_0\gamma_2).
\end{split}
\end{align}
and

\begin{align}\label{eq:teor4.1_eq2}
\begin{split}
\alpha^\circledast\alpha&=(\beta_1-sg_0\gamma_1)(\beta_1+\gamma_1g_0)+(\beta_1-sg_0\gamma_1)(\beta_2+\gamma_2g_0)\\
&\qquad+s(\beta_2-sg_0\gamma_2)(\beta_1+\gamma_1g_0)+s(\beta_2-sg_0\gamma_2)(\beta_2+\gamma_2g_0).
\end{split}
\end{align}
Since $(\beta_1+\gamma_1g_0),(\beta_1-sg_0\gamma_1)\in \zeta(\mathbb{F}G)$, from equations \eqref{eq:teor4.1_eq1} and \eqref{eq:teor4.1_eq2}, we obtain that 
\begin{align*}
\alpha\alpha^\circledast-\alpha^\circledast\alpha&=s(\beta_2+\gamma_2g_0)(\beta_2-sg_0\gamma_2)-s(\beta_2-sg_0\gamma_2)(\beta_2+\gamma_2g_0)\\
&=s\beta_2^2-g_0\beta_2\gamma_2+sg_0\gamma_2\beta_2-g_0^2\gamma_2^2-s\beta_2^2-sg_0\beta_2\gamma_2+g_0\gamma_2\beta_2+g_0^2\gamma_2^2\\
&=-g_0(1+s)\beta_2\gamma_2+g_0(s+1)\gamma_2\beta_2\\
&=g_0(1+s)(\gamma_2\beta_2-\beta_2\gamma_2)\\
&=g_0(1+s)[\gamma_2,\beta_2].
\end{align*}

Furthermore, from the linearity of the commutator we have that
$$[\gamma_2,\beta_2]=\sum_{\substack{n,m\not\in\zeta(N) \\ nm\neq mn}}\lambda_{nm}[n,m]=\sum_{\substack{n,m\not\in\zeta(N) \\ nm\neq mn}}\lambda_{nm}nm(1-(m,n))$$
and as $(m,n)=s$ and $s^2=1$, therefore 
$$\alpha\alpha^\circledast-\alpha^\circledast\alpha=g_0(1+s)\sum_{\substack{n,m\in\zeta(N) \\ nm\neq mn}}\lambda_{nm}nm(1-s)=0.$$
Thus, $\mathbb{F}G$ is normal.

\end{enumerate}
\end{proof}

From the last result and \cite[Theorem 2.2]{OP} we obtain a similar result to Theorem \ref{teor3} in the setting of oriented group involutions.
\begin{teor}
Let $\mathbb{F}$ be a field with $\car(\mathbb{F})\neq 2$ and let $G$ a non-abelian  group with involution $\ast$ and non-identity orientation $\sigma$. Then  $\mathbb{F}G$ is normal if and only if $\mathbb{F}G^+$ is commutative.
\end{teor}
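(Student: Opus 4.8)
The plan is to read off the equivalence from the structural description of normality just obtained (the last result) together with the known classification of when the $\circledast$-symmetric elements commute. Throughout, $\mathbb{F}G^{+}$ denotes the set of symmetric elements of $\mathbb{F}G$ under the oriented involution $\circledast$, i.e.\ $\mathbb{F}G^{+}=\{\alpha\in\mathbb{F}G:\alpha^\circledast=\alpha\}$. The last result states that $\mathbb{F}G$ is normal precisely in one of two configurations of $(G,N,\ast,\sigma)$: either $N=\ker(\sigma)$ is abelian with $G\setminus N\subseteq G^{+}$ and $n^\ast=a^{-1}na$ for $n\in N$, $a\in G\setminus N$; or $N$ and $G$ are LC-groups with a common unique non-identity commutator $s$, a central element $g_{0}\in(G\setminus N)\cap\zeta(G)$ with $g_{0}^\ast=sg_{0}$, and $\ast$ given by the displayed formula. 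The strategy is to show that these are exactly the configurations singled out by \cite[Theorem 2.2]{OP} as those for which $\mathbb{F}G^{+}$ is commutative, so that the chain $\mathbb{F}G\text{ normal}\iff\text{(1) or (2)}\iff\mathbb{F}G^{+}\text{ commutative}$ holds.

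First I would record the elementary polarization that reframes normality. For $\alpha\in\mathbb{F}G$ set $\alpha^{+}=\tfrac12(\alpha+\alpha^\circledast)\in\mathbb{F}G^{+}$ and $\alpha^{-}=\tfrac12(\alpha-\alpha^\circledast)\in\mathbb{F}G^{-}$; as $\alpha$ ranges over $\mathbb{F}G$ the pair $(\alpha^{+},\alpha^{-})$ ranges over all of $\mathbb{F}G^{+}\times\mathbb{F}G^{-}$, since for arbitrary $u\in\mathbb{F}G^{+}$, $v\in\mathbb{F}G^{-}$ one recovers them from $\alpha=u+v$. Expanding $\alpha\alpha^\circledast=(\alpha^{+}+\alpha^{-})(\alpha^{+}-\alpha^{-})$ and the reverse product gives $\alpha\alpha^\circledast-\alpha^\circledast\alpha=2[\alpha^{-},\alpha^{+}]$, so (using $\car(\mathbb{F})\neq2$) $\mathbb{F}G$ is normal if and only if $[\mathbb{F}G^{+},\mathbb{F}G^{-}]=0$. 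This identifies normality with the vanishing of every commutator between a symmetric and a skew element. I would stress that this reduction alone does \emph{not} close the gap: since $[\mathbb{F}G^{+},\mathbb{F}G^{+}]\subseteq\mathbb{F}G^{-}$ while $[\mathbb{F}G^{+},\mathbb{F}G^{-}]\subseteq\mathbb{F}G^{+}$, the hypotheses only yield Engel-type relations such as $[u,[u,w]]=0$, not the desired vanishing of $[u,w]$; genuine structural input is therefore required in both directions.

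To finish I would appeal to \cite[Theorem 2.2]{OP}, which lists the groups $(G,\ast,\sigma)$ for which $\mathbb{F}G^{+}$ is commutative, and verify case by case that its hypotheses coincide with conditions (1) and (2) above. In the abelian-$N$ case this is transparent: writing $G=N\cup Na$ and $\alpha=\beta+\gamma a$ with $\beta,\gamma\in\mathbb{F}N$, one computes $\alpha^\circledast=a^{-1}\beta a-\gamma a$, so $\alpha$ is $\circledast$-symmetric exactly when $\gamma=0$ and $a^{-1}\beta a=\beta$; hence $\mathbb{F}G^{+}$ is the conjugation-fixed subalgebra of the commutative algebra $\mathbb{F}N$, which is commutative. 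In the LC-case one uses $g_{0}\in\zeta(G)$ and the explicit form of $\ast$ (together with $s$ central of order $2$ being the unique commutator) to describe $\mathbb{F}G^{+}$ as spanned by central contributions and by terms $m(1+s)$ and $ng_{0}(1-s)$, whose pairwise commutators vanish because every commutator $(m,n)$ equals $s$ and $(1+s)(1-s)=0$. The main obstacle is precisely this bookkeeping: confirming that \cite[Theorem 2.2]{OP} produces neither more nor fewer groups than (1) and (2), and in particular that the distinguished central element $g_{0}\in(G\setminus N)\cap\zeta(G)$ and the exact placement of $s$ in the formula for $\ast$ match on both sides. Once the two lists of conditions are seen to agree, the stated equivalence follows immediately.
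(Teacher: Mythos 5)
Your proposal is correct and follows essentially the same route as the paper, whose entire proof is the observation that the preceding theorem (conditions (1) and (2) for normality) and \cite[Theorem 2.2]{OP} (the characterization of when $\mathbb{F}G^{+}$ is commutative under an oriented involution) single out the same class of $(G,\ast,\sigma)$. Your added polarization identity and the direct verification that $\mathbb{F}G^{+}$ is commutative in cases (1) and (2) are sound but are extra detail the paper leaves to the citation.
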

\section*{Acknowledgements}
The authors are grateful to their advisor Professor C\'esar Polcino Milies, who introduced them to the fascinating topic of group rings and of course for his timely suggestions, in particular they are indebted to him for his idea in
the construction of the special element $g_0$ in Lemma \ref{lem11}. Some results in this paper are part of
the first author's Ph.D. thesis, at Instituto de Matem\'atica e Estat\'istica of the
Universidade de S\~ao Paulo. This paper was written while the authors visited the Universidad Industrial de Santander and
Universidad de Nari\~no, and they thank the members of these institutions for their warm hospitality.
This research was partially supported by Vicerrectoria de Investigaciones, Postgrados y Relaciones Internacionales of the  Universidad de Nari\~no.

\bibliographystyle{plain}
\bibliography{bibliografia}  

\end{document}